\title{Near Optimality of Discrete-Time Approximations for Controlled McKean-Vlasov and Large Interacting Particle Diffusions}
\author{Somnath~Pradhan\thanks{S. Pradhan is with the Department
of Mathematics, Indian Institute of Science Education and Research Bhopal, MP-462066, India e-mail: (somnath@iiserb.ac.in).}\,\,
and Serdar~Y\"{u}ksel\thanks{S. Y\"{u}ksel is with Department of Mathematics and Statistics,
Queen's University, Kingston, ON, Canada e-mail: (yuksel@queensu.ca).}}
\newtheorem{theorem}{Theorem}[section]
\newtheorem{lemma}[theorem]{Lemma}
\newtheorem{corollary}[theorem]{Corollary}
\newtheorem{remark}[theorem]{Remark}
\newtheorem{assumption}{Assumption}
\numberwithin{equation}{section}
\newcommand{\Uadm}{\mathfrak U}
\newcommand{\Act}{\mathbb{U}}
\newcommand{\Usm}{\mathfrak U_{\mathsf{sm}}}
\newcommand{\Udsm}{\mathfrak U_{\mathsf{dsm}}}
\newcommand{\Um}{\mathfrak U_{\mathsf{m}}}
\newcommand{\pV}{\mathrm{V}} 
\newcommand{\fB}{{\mathfrak{B}}}  
\newcommand{\cC}{{\mathcal{C}}}   
\newcommand{\sD}{{\mathscr{D}}}   
\newcommand{\bE}{{\mathbb{E}}} 
\newcommand{\sF}{{\mathfrak{F}}}   
\newcommand{\cJ}{{\mathcal{J}}}  
\newcommand{\sL}{{\mathscr{L}}}  
\newcommand{\cM}{{\mathcal{M}}} 
\newcommand{\fM}{{\mathfrak{M}}} 
\newcommand{\cP}{{\mathcal{P}}}  
\newcommand{\sP}{{\mathscr{P}}}
\newcommand{\RR}{\mathds{R}}
\newcommand{\NN}{\mathds{N}}
\newcommand{\ZZ}{\mathds{Z}}
\newcommand{\Rd}{{\mathds{R}^{d}}}
\DeclareMathOperator{\Exp}{\mathbb{E}}
\newcommand{\D}{\mathrm{d}}
\newcommand{\df}{:=}
\newcommand{\transp}{^{\mathsf{T}}}
\newcommand{\order}{{\mathscr{O}}}
\newcommand{\abs}[1]{\lvert#1\rvert}
\newcommand{\norm}[1]{\lVert#1\rVert}
\begin{document}
\date{}
\maketitle

\begin{abstract}
We study stochastic optimal control problems for (possibly degenerate) McKean-Vlasov controlled diffusions and obtain discrete-time as well as finite interacting particle approximations. (i) Via continuity of the expected cost in control policy by endowing the space of relaxed policies with a compact weak topology, we prove near-optimality of piecewise-constant policies which leads to a discrete-time model. We show that the discrete-time value functions (for finite-horizon and discounted infinite-horizon) converge to their continuous-time counterparts as the timestep converges to zero. In particular, we establish that optimal policies for the discrete-time model exists and they are near-optimal for the original continuous-time problem. (ii) We then extend these approximation and near-optimality results to $N$-particle interacting systems under centralized or decentralized mean-field sharing information structure, proving that the discrete-time McKean-Vlasov policy is asymptotically optimal as $N\to \infty$ and the time step goes to zero. Using discrete-time approximation as an intermediate step leads to complementary conditions compared to those in the literature. (iii) We thus develop a unified approximation framework for McKean-Vlasov optimal control problems via discrete-time McKean-Vlasov control problems (and associated numerical methods such as finite model approximations), and we also show near optimality of such approximate policy solutions for the $N$-agent interacting models under centralized and decentralized control.
\end{abstract}



\tableofcontents

\section{Introduction}

Stochastic optimal control theory is a fundamental area in applied mathematics and engineering, concerned with optimizing the behavior of systems influenced by random noise. In classical stochastic control, the dynamics are typically described by stochastic differential equations (SDEs) where the drift and diffusion coefficients depend on the state and control. However, many complex systems, particularly those arising in engineering, economics, finance, physics, and social sciences, exhibit interactions between a large number of agents. These interactions often manifest in a mean-field manner, where the behavior of an individual agent is influenced by the aggregate behavior, or the distribution, of all agents. This leads to the concept of McKean-Vlasov systems, where the dynamics of each agent depend not only on its own state and control but also on the probability distribution of the system's state. The dynamics for such systems is typically given by an SDE of the type
\begin{align}\label{MVModel1}
dX_t = b(X_t, \sL(X_t), U_t) dt + \upsigma(X_t, \sL(X_t)) dW_t, \quad X_0 = x \in \mathbb{R}^d
\end{align}
where $\sL(X_t)$ denotes the law (probability distribution) of $X_t$, $U_t$ is a control process and $W_t$ is a standard Brownian motion. More detailed discussion on this equation and conditions on the drift and diffusion terms above are presented further below. 

In this paper, we develop a unified approximation and regularity framework involving the continuous-time model (\ref{MVModel1}), its discrete-time approximation (see \ref{Eeuler_maruyama}), a continuous-time $N$-interacting particle model (see \ref{ENState}), and a discrete-time $N$-interacting particle model (see \ref{ENStateDiscrete}). Precise models and formal results are presented further below, and an overview of our contributions is given in Figure \ref{fig:approximation_roadmap}.

Stochastic optimal control for such McKean-Vlasov diffusions represents a significant extension of classical stochastic control theory. It addresses scenarios where the control objective is to optimize a system described by a McKean-Vlasov SDE. This framework has gained increasing attention due to its ability to model large population systems, mean-field games, and other complex phenomena exhibiting distributional dependence. In particular, these processes are valuable in modeling various real-world phenomena, e.g. (i) {\it Collective dynamics of interacting agents and robotics:} Coordinating the movements of a large number of agents or robots, where each agent adjusts its behavior based on the positions of its neighbors or the empirical distribution of the other agents; see e.g.  \cite{carrillo2022controlling,albi2022moment,rapakoulias2025steering,toumi2024mean,lee2025traffic,liu2024optimal}. (ii) {\it Generative learning, distributional approximations, and transformers:} Machine learning models increasingly utilize interactive agent dynamics to both map input ensembles to output ensembles as well as to generate stochastic data with prespecified marginal or joint distributions; see e.g. \cite{geshkovski2025mathematical,awadelkarim2025particle,chen2025quantitative,geshkovski2024measure}. Furthermore, the score-based diffusion method for generative modeling \cite{song2020score} critically builds on the study of backwards stochastic differential equations which are of the McKean-Vlasov type \cite{anderson1982reverse}. Optimal transport over dynamical systems (see e.g. \cite{chen2021stochastic,chen2021optimal}) has emerged as an ideal formulation for this problem \cite{alouadi2025robust}. (iii) {\it Financial mathematics:} Capturing collective behavior among investors, where trading decisions are influenced by the overall market sentiment; see e.g. \cite{borkar2010mckean,pham2016linear}. (iv) {\it Epidemiology:} Modeling the spread of infectious diseases under control, accounting for the influence of vaccination strategies and social distancing measures which serve as control policies; see e.g. \cite{morsky2023impact}.

The analysis of these control problems presents unique challenges compared to classical stochastic control. The dependence on the distribution introduces further non-linearity in the system. The presence of the probability distribution in the dynamics transforms the control problem into one defined on an infinite-dimensional space, requiring sophisticated analytical tools. The standard dynamic programming approach, which relies on solving the Hamilton-Jacobi-Bellman (HJB) equation, becomes significantly more complex in this setting due to the infinite-dimensional nature of the state space, and the non-Markovian property of the (individual) agent dynamics.

Significant progress has been made in the development of an optimal control theory for McKean-Vlasov SDEs. The existence of optimal control policies has been established under various conditions, often relying on the direct method of calculus of variations or on the existence of solutions to the associated mean-field HJB equation. The optimal control of such problems (including under game theoretic formulations) has been studied in \cite{pham2016linear,carmona2018probabilistic,bayraktar2018randomized,pham2017dynamic,achdou2015system,lauriere2014dynamic,hofer2024potential} under complementary formulations and assumptions. Classical dynamic programming methods give rise to Hamilton–Jacobi–Bellman (HJB) equations over the space of measures. Recent work has rigorously developed this framework: for example, \cite{lauriere2014dynamic,pham2017dynamic} proved a dynamic programming principle for such control problems and derived the corresponding HJB equation on the space of probability measures, though with some restrictions such as Lipschitz regularity in \cite{pham2017dynamic} and more specific formulation in \cite{lauriere2014dynamic}; these conditions were then relaxed in \cite{bayraktar2018randomized}. \cite{lacker2017limit,lacker2015mean} utilized weak convergence as well as Bellman optimality methods to arrive at the existence of optimal solutions for possibly degenerate diffusions of McKean-Vlasov type. Further recent references include \cite{djete2022mckean,cardaliaguet2023algebraic,daudin2024optimal}. While these analytical results characterize the value function, solving the resulting infinite-dimensional HJB equation explicitly is generally intractable. 


There are two main solution approaches for controlled McKean–Vlasov problems.  The \emph{probabilistic} (weak / relaxed / maximum-principle) approach works directly with stochastic formulations: it permits open-loop or measure-valued (relaxed) controls, builds compactness/tightness in the space of laws, and proves existence optimal/ near-optimal policies by weak convergence, martingale problem formulation, and chattering arguments \cite{CarmonaDelarue18I,lacker2015mean,lacker2017limit,lacker2018probabilistic}. As we will show, this approach leads directly to implementable time discretizations under relatively mild Lipschitz continuity and growth hypotheses.

On the other hand, the \emph{PDE / dynamic-programming} (HJB / master-equation) approach assumes or arrives at the Markov/feedback structure of the control policies, so that one can formulate the value function in an appropriate function space. To formulate and analyze the HJB/master PDE, we often require higher-order regularity of the system model, uniform ellipticity or nondegeneracy/ convexity assumptions. Treating common noise in this framework typically forces conditional-law or stochastic-PDE formulations which demand even more structure \cite{CarmonaDelarue18I,DjetePossamaiTan2022,LionsLecture}.  However, the PDE/HJB route is appropriate when one seeks a sharp value-function characterization and wishes to construct an optimal feedback policy under strong regularity assumptions.


A discrete-time mean-field theory leads to a formulation which is ideally suited for implementation of learning and approximation algorithms. The existence and dynamic programming for McKean-Vlasov dynamics was first studied in \cite{pham2016discrete} to our knowledge, and later both existence and approximations under a weak Feller continuity analysis was presented in \cite{sanjarisaldiy2024optimality}. Accordingly, our analysis invites a direct and accessible approach for the control of such processes. As formulated and analyzed in \cite[Section 7.2]{sanjarisaldiy2024optimality}, for the discrete-time McKean-Vlasov formulations, one can obtain finite model approximations with rigorous near optimality guarantees via measure valued state and action quantization building on the weak Feller regularity and the analysis in \cite{SaYuLi15c}. Accordingly, the analysis in our paper has significant implications for optimal control, approximations, and learning of continuous-time McKean-Vlasov dynamics and their control. 

We also note that for the control-free setup, existence of strong or weak solutions has been studied extensively (see e.g. \cite[Theorem~4.21]{carmona2018probabilistic} and \cite{mishura2020existence} for very general conditions relaxing Lipschitz regularity in the drift term for strong solutions as well as weak solutions), and for such control-free models discrete-time approximations have been studied in \cite{li2023strong} among others.

A corollary of our analysis is a contribution to the theory on the near optimality of McKean-Vlasov solutions for $N$-agent interacting particle diffusion models when $N$ is large under relatively weaker conditions than those available in the literature presented in Theorem \ref{nearOptDiscFiniteHor}: Under various technical conditions, the sequence of solutions of an $N$-agent interacting particle system with centralized information structure admits an optimal solution which converges, possibly via a subsequence, to the McKean-Vlasov solution as $N \to \infty$: see \cite{fornasier2014mean,fornasier2019mean} for a deterministic setup, and  \cite{lacker2017limit,jackson2023rate} for controlled interacting diffusions, under different assumptions and solution approaches. An analogous result is obtained in discrete-time using the theory of $N$-exchangeable processes and their limits to exchangeable processes \cite{sanjarisaldiy2024optimality} (see also \cite{bauerle2023mean} as a critical study used for an intermediate analysis). While most of these papers focused on asymptotic convergence, \cite{jackson2023rate} established a  quantitative \(1/N\) convergence rate under a more restrictive model. 


The main contributions of this paper are the following:
\begin{itemize}
\item While existence of optimal policies is not a new contribution, as a general analysis is already reported in the literature notably by Lacker \cite{lacker2017limit,lacker2015mean} also for models with unbounded coefficients and noncompact control spaces, we utilize the continuous dependence of solution measures in control policy under a suitable metric as a critical step towards arriving at near optimal solutions. In particular, rather than a martingale characterization of solutions, we show the $L_2$ continuity of solutions in control policy, which allows for a quantitative convergence analysis: Building on such a continuity (a corollary of which is existence) analysis, as a first primary contribution, we show that a discrete-time approximation (given in (\ref{SdiscreteSDE})) can be obtained so that the value of the discrete-time model converges to that of the continuous-time problem. We work under boundedness and Lipschitz regularity assumptions, which allow us to obtain explicit quantitative approximation estimates; see Corollary~\ref{cor:EMAppro1AMV} and Corollary~\ref{cor:CNearFinite}. Furthermore, a piecewise constant extension of the optimal control policy of the discrete-time model is shown to be near-optimal for the continuous-time model (see Theorems \ref{TNearFinite} and \ref{TNeardiscounted}). 

In this direction, we also establish the existence of an optimal solution to the discrete-time approximate model in Theorem \ref{TOptiDiscMcKV1A} by endowing the discrete-time control policy with a suitable Young topology under which a weak Feller condition holds for the (sampled) discrete-time model. 

To our knowledge, except for the recent contemporary study \cite{reisinger2025convergence} such an analysis on discrete-time approximations is largely absent from the literature. The paper \cite{reisinger2025convergence} establishes stronger quantitative approximation results, including explicit convergence rates for both the value function and the associated control policies, under additional structural and regularity assumptions. In particular, the analysis relies on conditions such as linear dependence of the drift on the control variable and higher regularity of the value function, notably the assumption $(H.5)$, which requires 
\[
V_\pi^c \in C^{1,2}_2\big([0,T]\times \mathcal{P}_2(\mathbb{R}^n)\big).
\]
These assumptions enable sharper quantitative estimates and approximation results. In the present work, we instead adopt a probabilistic framework which applies under comparatively milder assumptions (boundedness and Lipschitz continuity), while still yielding quantitative approximation estimates for the value functional and near-optimality properties for the associated discrete-time optimal control policies.


\item An implication of our analysis is on the near optimality of McKean-Vlasov solutions for $N$-agent interacting particle diffusion models when $N$ is large under complementary conditions to those present in the literature \cite{lacker2017limit,jackson2023rate}: Theorems \ref{nearOptDiscFiniteHor} and \ref{nearOptDiscDiscount} establish near optimality of discrete-time policies of the type $U(t,X_t,\mu_t)$ (where time dependence is suppressed for the discounted cost criterion; see the theorems for precise statements and notation) through an indirect approach: By discrete-time approximation serving as an intermediate step, and using the theory of $N$-exchangeable processes and their limits to exchangeable processes \cite{sanjarisaldiy2024optimality}, 
we are able to obtain such a near optimality result (of the McKean-Vlasov solution for the $N$-particle problem). Compared with \cite{lacker2017limit} the conditions on near optimality of McKean-Vlasov policies (and those which are Markov in discrete-time) for large $N$ are complementary; notably a convexity condition is not needed. 

A uniform discrete-time approximation result, over the number $N$ of particles, presented in Theorem \ref{TNconvevaluefunc} serves as a critical supporting result in our analysis. 


\item We finally study the applicability of finite model approximations via establishing the weak Feller regularity of the discrete-time approximation where the control policy space is endowed with a Young topology. Accordingly, Theorem \ref{thm:finite_state_near_discrete} shows that a finite model which approximates the discrete-time McKean-Vlasov problem (which in turn approximates the continuous-time problem) is near optimal for the $N$-agent interacting particle diffusion model.

\item Our paper thus offers a unified approximation theory both in time and the number of agents and in the measure-valued state. This is summarized in Figure \ref{fig:approximation_roadmap}. 
\end{itemize}

\begin{figure}[H]
\centering
\begin{tikzpicture}[
node distance=2.9cm and 3.8cm,
every node/.style={font=\small},
box/.style={
draw,
rounded corners,
align=center,
minimum width=4.6cm,
minimum height=1.8cm
},
arrow/.style={
->,
thick,
>=latex
},
diag/.style={
->,
thick,
dashed,
>=latex
}
]

\node[box] (MVc) {
\textbf{Continuous-time}\\
McKean--Vlasov Control\\[1mm]
Optimal value: $V_T$
};

\node[box, below=of MVc] (MVd) {
\textbf{Discrete-time}\\
McKean--Vlasov Control\\[1mm]
Optimal value: $V_T^{h}$
};

\node[box, right=of MVc] (Nc) {
\textbf{Continuous-time}\\
$N$-Particle System\\[1mm]
Optimal value: $V_T^{N}$
};

\node[box, right=of MVd] (Nd) {
\textbf{Discrete-time}\\
$N$-Particle System\\[1mm]
Optimal value: $V_T^{N,h}$
};

\node[box, below=of MVd] (FS) {
\textbf{Finite-state}\\
Approximation\\[1mm]
Approximating policy: $\hat{U}^{h,n}$
};

\draw[arrow] (MVd) -- node[left,font=\scriptsize,xshift=-1mm] {
\begin{tabular}{c}
$\displaystyle \abs{\cJ_{T}^{\tilde{U}^{*,h}}(x) - V_T(x)}  \to 0$\\
as $h\downarrow0$\\[1mm]
Theorem~\ref{TNearFinite}: \\
Discrete-time MV optimal policy\\
is near-optimal for\\
continuous-time MV problem 
\end{tabular}
} (MVc);

\draw[arrow] (MVd) -- node[below,font=\scriptsize] {
\begin{tabular}{c}
$\displaystyle \abs{V_T^h(x)- V_T^{N,h}(x)} \to 0,$ as $N\to \infty$\\ [1mm]
\cite[Theorems 2 and 4]{sanjarisaldiy2024optimality}:\\
In discrete-time MV optimal value
converges to the optimal value of the $N$-particle system
\end{tabular}
} (Nd);

\draw[arrow] (MVc) -- node[above,font=\scriptsize] {
\begin{tabular}{c}
$\displaystyle |V_T(x)-V_T^N(x)| \to 0$ \, as $N\to\infty$\\[1mm]
Theorem~\ref{thm:Nparticlelimits}:\\
Continuous-time MV optimal policy\\
is near-optimal for\\
continuous-time $N$-particle system
\end{tabular}
} (Nc);

\draw[arrow] (Nd) -- node[right,font=\scriptsize,xshift=1mm] {
\begin{tabular}{c}
$\displaystyle |V^{N,h}(x)-V^N(x)| \to 0$\\
as $h\downarrow0$\\[1mm]
Theorem~\ref{TNconvevaluefunc}:\\
Discrete-time $N$-particle\\
system  optimal value converges\\
to the continuous-time $N$-particle \\
system optimal value.
\end{tabular}
} (Nc);

\draw[diag] (MVd) -- node[sloped,above,font=\scriptsize] {
\begin{tabular}{c}
$\displaystyle \bigl|\cJ^N_{T}(x,{\bf U}^{N,h}) - V_T^N(x)\bigr| \to 0$\\ [1mm]
Theorem~\ref{nearOptDiscFiniteHor}:\\
Discrete-time MV optimal policy\\
is near-optimal for\\
continuous-time $N$-particle system.
\end{tabular}
} (Nc);

\draw[arrow] (FS) -- node[right,font=\scriptsize,xshift=1mm] {
\begin{tabular}{c}
$\displaystyle \bigl|
\cJ_{T,h}^{\hat U^{h,n}}(x)-V_T^h(x)
\bigr|
\to 0\,.$\\ [1mm]
Theorem~\ref{thm:finite_state_near_discrete}:\\
Finite-state optimal policy\\
is near optimal for the discrete-time\\
McKean--Vlasov control problem
\end{tabular}
} (MVd);

\end{tikzpicture}

\caption{Approximation results of the paper}

\label{fig:approximation_roadmap}
\end{figure}
\section{Optimal Control of McKean-Vlasov Diffusions} \label{Scontinuous}
McKean-Vlasov diffusions, also known as mean-field diffusions, are stochastic processes whose dynamics are influenced by their own probability distribution. In the context of controlled systems, we consider McKean-Vlasov \textit{controlled} diffusions, where the drift depends not only on the state $X_t$ and control $U_t$ at time $t$, but also on the law (probability distribution) of the state $X_t$, denoted by $\sL(X_t)$ and the diffusion matrix $\upsigma$ depends on state $X_t$ and its law $\sL(X_t)$\,.

Let $(\Omega, \sF, (\sF_t)_{t \geq 0}, \sP)$ be a filtered probability space satisfying the usual conditions and $W$ be a $d$-dimensional standard Brownian motion defined on this space. Let $\pV = \sP(\Act)$ be the space of all probability measures on $\Act$ with topology weak convergence. We assume that the action space $(\Act, d_{\Act})$ is a compact metric space. Consider a controlled diffusion process $X$ taking values in $\Rd$, with dynamics given by the stochastic differential equation (SDE):

\begin{equation} \label{E1.1}
dX_t = b(X_t, \sL(X_t), U_t) dt + \upsigma(X_t, \sL(X_t)) dW_t, \quad X_0 = x \in \mathbb{R}^d,
\end{equation}
where
\begin{itemize}
\item $b: \Rd \times \sP(\Rd) \times \Act \to \Rd$ is the drift term. We extend the drift term $b : \Rd \times \sP(\Rd) \times \pV \to \Rd$ as follows:
\[
b(x, \mu, v) = \int_{\Act} b(x, \mu, \zeta)\, v(d\zeta), \quad \text{for } v \in \pV.
\]
\item $\upsigma: \Rd \times \sP(\Rd) \to \RR^{d \times d}$ is the diffusion matrix.
\item $U$ is a $\pV$ valued process satisfying the following non-anticipativity condition: for $s<t\,,$ $W_t - W_s$ is independent of
$$\sF_s := \,\,\mbox{the completion of}\,\,\, \sigma(X_0, \mathcal{L}(X_0), U_r, W_r : r\leq s)\,\,\,\mbox{relative to} \,\, (\sF, \sP)\,.$$  
The process $U$ is called an \emph{admissible} control, and the set of all admissible controls is denoted by $\Uadm$
\item $\sL(X_t)$ denotes the law (probability distribution) of $X_t$.
\item $\sP(\Rd)$ denotes the space of probability measures on $\Rd$.
\end{itemize}
To ensure existence and uniqueness of solutions of (\ref{E1.1}), we impose the following assumptions on the drift $b$ and the diffusion matrix $\upsigma$\,.

\begin{assumption} \label{Alipschitz}
The function $b$ is jointly continuous and both $b$ and $\upsigma$ are Lipschitz continuous in their first two arguments. That is, there exists a constant $C_1 > 0$ such that for all $x, y \in \mathbb{R}^d$, $\mu, \nu \in \mathcal{P}(\mathbb{R}^d)$ and $\zeta \in \Act$\,:

\begin{align*}
  |b(x, \mu, \zeta) - b(y, \nu, \zeta)| +  |\upsigma(x, \mu) - \upsigma(y, \nu)| &\leq C_1(|x - y| + W_1(\mu, \nu)),
\end{align*}
where $W_1(\mu, \nu)$ is the Wasserstein-1 distance between probability measures $\mu$ and $\nu$, defined as:
\[
W_1(\mu, \nu) = \inf_{\pi \in \Pi(\mu, \nu)} \int_{\mathbb{R}^d \times \mathbb{R}^d} |x - y| d\pi(x, y),
\]
where $\Pi(\mu, \nu)$ is the set of all probability measures on $\mathbb{R}^d \times \mathbb{R}^d$ with marginals $\mu$ and $\nu$.
\end{assumption}

\begin{assumption} \label{Abounded}
The functions $b$ and $\upsigma$ are uniformly bounded.  That is, there exists a constant $C_2 > 0$ such that for all $x \in \mathbb{R}^d$, $\mu \in \mathcal{P}(\mathbb{R}^d)$ and $\zeta \in \Act$:

\begin{align*}
|b(x, \mu, \zeta)| + |\upsigma(x, \mu)| \leq C_2,
\end{align*}
\end{assumption}

\begin{assumption} \label{Acost}
The running cost function $c: \mathbb{R}^d \times \mathcal{P}(\mathbb{R}^d) \times \Act \to \mathbb{R}$ is jointly continuous and both $c$ and the terminal cost function $c_T: \mathbb{R}^d \times \mathcal{P}(\mathbb{R}^d) \to \mathbb{R}$ are uniformly bounded and Lipschitz continuous, with $c$ being Lipschitz continuous in first two arguments. That is, there exist constants $C_3, C_4 > 0$ such that for all $x, y \in \mathbb{R}^d$, $\mu, \nu \in \mathcal{P}(\mathbb{R}^d)$ and $\zeta \in \Act$:
\begin{align*}
&|c(x, \mu, \zeta) - c(y, \nu, \zeta)| + |c_T(x, \mu) - c_T(y, \nu)| \leq C_3(|x - y| + W_1(\mu, \nu)) \\
& |c(x,\mu,\zeta)| + |c_T(x,\mu)| \le C_4\,.
\end{align*}
\end{assumption}

\begin{remark}\label{rem:cost-continuity}
The Lipschitz-in-$(x,\mu)$ regularity of the running and terminal costs assumed in this paper is \emph{not} essential for near-optimality results.  In fact, the arguments may be carried out under the substantially weaker hypothesis that the costs are continuous in their arguments and satisfy suitable growth bounds (guaranteeing uniform integrability of the involved state laws). 

However, the stronger Lipschitz continuity hypotheses simplify several parts of the analysis and provide \emph{stronger} error bounds for example, the $O(h^{1/2})$ convergence rate of the value functions over the space of the piece-wise constant policies (see, Corollary~\ref{cor:EMAppro1AMV}, Corollary~\ref{cor:CNearFinite}, Corollary~\ref{Cor:forNparticle1a} and Lemma~\ref{lem:cost_consistency}).
\end{remark}

Under Assumption~\ref{Alipschitz} and Assumption~\ref{Abounded}, arguing as in \cite[Theorem~4.21]{CarmonaDelarue18I}, we have that there exists a unique strong solution to (\ref{E1.1}) for each $U\in \Uadm$\,.


Here, the objective of the controller is to minimize total cost by choosing appropriate control policy. We are interested in the following cost evaluation criteria: \\

\noindent\textbf{Finite Horizon Cost Criterion:} For $U \in\Uadm$, the associated \emph{Finite horizon cost} is given by
\begin{equation} \label{EFcost}
\cJ_T^{U}(x) = \mathbb{E}_{x}^{U} \left[ \int_0^T c(X_t, \mathcal{L}(X_t), U_t) dt + c_T(X_T, \mathcal{L}(X_T)) \right],
\end{equation}
where $T > 0$ is a fixed terminal time and $X_{\cdot}$ is the solution of the (\ref{E1.1}) corresponding to $U\in\Uadm$ and $\Exp_x^{U}$ is the expectation with respect to the law of the process $X_{\cdot}$ with initial condition $x$. The McKean-Vlasov control problem is to find an admissible control $U^*$ that minimizes $\cJ_T^U(x)$, i.e., it satisfies
\begin{equation}\label{EFoptimal_control}
V_T(x) := \inf_{U \in \Uadm} \cJ_T^{U}(x) = \cJ_T^{U^*}(x)\,.
\end{equation}
Here $V_T(x)$ is called the value function and $U^*$ is an optimal policy.\\ \\
\textbf{Discounted Cost Criterion:} For $U \in\Uadm$, the associated \emph{$\alpha$-discounted cost} is given by
\begin{equation}\label{EDiscountedcost}
\cJ_{\alpha}^{U}(x) \,:=\, \Exp_x^{U} \left[\int_0^{\infty} e^{-\alpha s} c(X_s,\mathcal{L}(X_s), U_s) \D s\right],\quad x\in\Rd\,,
\end{equation} where $\alpha > 0$ is the discount factor\,. Here the controller tries to minimize (\ref{EDiscountedcost}) over the set of admissible controls $U\in \Uadm$\,.
A control $\bar{U}^{*}\in \Uadm$ is said to be an optimal control if for all $x\in \Rd$ 
\begin{equation}\label{EOpDiscost}
\cJ_{\alpha}^{\bar{U}^*}(x) = \inf_{U\in \Uadm}\cJ_{\alpha}^{U}(x) \,\,\, (:= \,\, V_{\alpha}(x))\,.
\end{equation}
\section{Continuity of Cost in Control Policy and Existence of Optimal Policies} \label{Sdiscrete}

\subsection{Topology on control policy}
In order to utilize the weak convergence technique, we introduce the relaxed control representation of the control policies (for more details see \cite[Section~9.5]{KD92})\,. 
Let $\fB(\Act\times [0, \infty))$ be the $\sigma$-algebra of Borel subsets of $\Act\times [0, \infty)$\,. Then for any Borel measure $\hat{m}$ on $\fB(\Act\times [0, \infty))$, satisfying $\hat{m}(\Act\times [0, t]) = t$ for all $t\geq 0$, it follows that there exists a measure $\hat{m}_{t}$ on $\fB(\Act)$ such that $\hat{m}(\D\zeta, \D t) = \hat{m}_t(\D\zeta) \D t$\,. Let 
\begin{align*}
\fM(\infty) &\df \{\hat{m}: \hat{m}\,\,\text{is a Borel measure on}\,\,\fB(\Act\times [0, \infty))\,\,\text{satisfying}\,\, \hat{m}(\Act\times [0,t]) = t\nonumber\\
&\text{for any}\,\, t\geq  0,\,\, \text{with}\,\, \hat{m}_t\in \cP(\Act)\}\,.
\end{align*} The space $\fM(\infty)$ can be metrized by using the Prokhorov metric over $\cP(\Act\times [0, n])$ for $n\in\NN$\,. A sequence $\hat{m}^{n}$ converges to $\hat{m}$ in $\fM(\infty)$, if the normalized restriction of $\hat{m}^{n}$ converges weakly to the normalized restriction $\hat{m}$ on $\cP(\Act\times [0, k])$ for each $k\in \NN$\,. In particular, a sequence $\hat{m}^{n}$ converge to $\hat{m}$ in $\fM(\infty)$ if for any $\phi\in \cC_c(\Act\times [0, \infty))$ we have (this is known as compact weak topology)
$$\int_{\Act\times [0, \infty)} \phi(\zeta, t)\hat{m}^{n}(\D \zeta, \D t)\to \int_{\Act\times [0, \infty)} \phi(\zeta, t)\hat{m}(\D \zeta, \D t)\,.$$

Since $\cP(\Act\times [0, n])$ is complete, separable and compact for each $n\in\NN$, the space $\fM(\infty)$ inherits those properties\,. 

Now, define
\begin{align*}
\cM(\infty) &= \bigg\{m:(\Omega, \sF, \sP)\to \fM(\infty) \mid\,\, \text{for any}\,\,0\leq s <t < \infty,\,\,  m_{[0,s]}\,\,\text{is independent of}\,\, W_t - W_s\,\bigg\}\,.
\end{align*} Since the range space $\fM(\infty)$ is compact, any sequence in $\cM(\infty)$ is tight\,. Moreover, arguing as in \cite[Theorem~2.3.1]{ABG-book}, we have the following chattering lemma.
\begin{lemma}\label{LChatt}
Let $m\in\cM(\infty)$ be a relaxed control on probability space $(\Omega, \sF, \sP)$. Then there exists a sequence $m^{n}$ of non-anticipative piece-wise constant precise controls in $\cM(\infty)$ (defined on $(\Omega, \sF, \sP)$) such that, for each $T>0$ and $f\in \cC([0, T]\times \Act)$, we have
\begin{equation}\label{ELChatt1}
\int_{0}^{T}\int_{\Act}f(t, \zeta)m_{t}^{n}(d \zeta)d t \to \int_{0}^{T}\int_{\Act}f(t, \zeta)m_{t}(d \zeta)d t\quad\,\,\mbox{a.s. on}\,\,\, \Omega\,.
\end{equation}
\end{lemma}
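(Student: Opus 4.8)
The plan is to build the approximating sequence $m^n$ by a two-scale discretization---first block-averaging $m$ in time, then \emph{chattering} the resulting fixed block-measures among finitely many actions---and to control the error by uniform continuity of the test function $f$, which will in fact produce a \emph{deterministic} (hence almost sure) error bound. Throughout I work on the fixed space $(\Omega,\sF,\sP)$ and adapt the construction of \cite[Theorem~2.3.1]{ABG-book}. Fix a horizon $T$ and a mesh $\delta>0$, and for each block $I_k \df [k\delta,(k+1)\delta)$ set $\bar m^\delta_k \df \frac1\delta \int_{I_k} m_s\,ds \in \cP(\Act)$, a random probability measure on $\Act$ that is measurable with respect to the completion of $\sigma(m_s : s\le (k+1)\delta)$. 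Since any $f\in\cC([0,T]\times\Act)$ is uniformly continuous on the compact set $[0,T]\times\Act$, replacing $f(t,\cdot)$ by $f(k\delta,\cdot)$ on $I_k$ shows that the relaxed control that is piecewise-constant-in-time and equal to $\bar m^\delta_k$ on $I_k$ reproduces $\int_0^T\!\int_\Act f\, m_t\,dt$ up to an error bounded by $\omega_f(\delta)\,T$, where $\omega_f$ is the modulus of continuity of $f$.

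For the chattering step I partition $\Act$ into finitely many Borel cells $A_1,\dots,A_{J}$ of diameter at most $\varepsilon$ (possible since $\Act$ is compact metric), fix deterministic representatives $\zeta_j\in A_j$, and on each block lay down sub-intervals of lengths $\bar m^\delta_k(A_j)\,\delta$ on which the control takes the constant value $\zeta_j$. The resulting control is piecewise constant and precise, and its block time-average is $\sum_j \bar m^\delta_k(A_j)\,\delta_{\zeta_j}$; integrated against $f(k\delta,\cdot)$ this differs from $\int_\Act f(k\delta,\cdot)\,\bar m^\delta_k$ by at most $\omega_f(\varepsilon)$, so summing over the $T/\delta$ blocks (each weighted by $\delta$) the total chattering error is at most $\omega_f(\varepsilon)\,T$. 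The weights $\bar m^\delta_k(A_j)$ are only revealed at the \emph{end} of block $I_k$, so to preserve non-anticipativity I introduce a one-block delay: on $I_{k+1}$ I chatter according to the weights $\bar m^\delta_k(A_j)$, which are available at the left endpoint $(k+1)\delta$ and depend on $m$ (hence on $W$) only through times $\le (k+1)\delta$, with a fixed arbitrary action used on the initial block $I_0$. This guarantees that $m^n_{[0,s]}$ is independent of $W_t-W_s$, so $m^n\in\cM(\infty)$, while the time shift perturbs the target integral by only $O\big(\|f\|_\infty\,\delta + \omega_f(\delta)\,T\big)$.

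Choosing $\delta=\delta_n\downarrow0$ and $\varepsilon=\varepsilon_n\downarrow0$ and collecting the three estimates gives, for every fixed $f$ and $T$,
$$\Big|\int_0^T\!\!\int_\Act f\, m^n_t\,dt-\int_0^T\!\!\int_\Act f\, m_t\,dt\Big|\le C\big(\omega_f(\delta_n)+\omega_f(\varepsilon_n)\big)T+C\|f\|_\infty\,\delta_n\xrightarrow[n\to\infty]{}0,$$
a bound that is deterministic in $\omega$, so (\ref{ELChatt1}) holds surely, and in particular almost surely. The main obstacle is the interplay between non-anticipativity and the chattering construction: the averaged block-weights needed to schedule the pure actions are causal only at the \emph{end} of each averaging window, which is precisely why the one-block delay is essential, and one must then verify that this delay keeps the control adapted and does not degrade the convergence. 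A secondary technical point is the measurable dependence of the random sub-interval breakpoints on $\omega$; this holds because the partition $\{A_j\}$ and representatives $\{\zeta_j\}$ are deterministic and only the weights $\bar m^\delta_k(A_j)$ are random (being $\sF_{(k+1)\delta}$-measurable). If one did not wish to exploit the uniform-continuity bound, almost sure convergence for all $f$ simultaneously would instead follow from separability of $\cC_c(\Act\times[0,\infty))$ together with a diagonal/subsequence argument.
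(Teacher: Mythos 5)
Your proposal is correct, and it is essentially the same argument the paper relies on: the paper offers no proof of its own beyond invoking the chattering construction of \cite[Theorem~2.3.1]{ABG-book}, and your two-scale scheme (time block-averaging, action-space discretization with proportional sojourn times, and the one-block delay to preserve non-anticipativity, with errors controlled by the modulus of continuity of $f$) is precisely that construction spelled out in detail.
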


\subsection{Existence of an optimal policy}

In this subsection, we consider the finite horizon criterion. Let $\cM(T)$ be the restriction of $\cM(\infty)$ on $[0, T]$. By similar argument as in \cite[Subsection~6.6]{CarmonaDelarue18I}, we have the following existence result.
\begin{theorem} \label{ThExistenceA}
Suppose Assumptions \ref{Alipschitz}, \ref{Abounded} and \ref{Acost} hold. Then there exists $m^*\in\cM(T)$ such that for all $x\in \Rd$
\begin{equation}\label{EThExistenceA}
V_T(x) = \cJ_T^{m^*}(x)\,.
\end{equation}
\end{theorem}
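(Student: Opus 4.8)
The plan is to reduce the claimed \emph{simultaneous} optimality to a family of pointwise-in-$x$ optimality statements, and then splice that family into one admissible policy by letting the control read off the initial datum. This splicing is legitimate precisely because $\sF_0\supseteq\sigma(X_0)$, so that an admissible control is permitted to depend on $X_0$; the single policy will be a measurable feedback $m^*=m^*_{X_0}$ on the initial condition, which for a deterministic start $X_0=x$ collapses to a relaxed control that is optimal for that very $x$.

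First I would fix $x\in\Rd$ and produce an optimal relaxed control $m^*_x\in\cM(T)$. Taking a minimizing sequence $m^n\in\cM(T)$ with $\cJ_T^{m^n}(x)\to V_T(x)$, the compactness of the range $\fM(T)$ makes $\{m^n\}$ tight and Assumption \ref{Abounded} gives tightness of the associated solutions $X^n$ of (\ref{E1.1}); along a weakly convergent subsequence $(X^n,m^n)$ converges in law to some $(X,m^*_x)$ solving the McKean-Vlasov martingale problem, and Assumptions \ref{Alipschitz}, \ref{Acost} transfer the cost to the limit, so that $\cJ_T^{m^*_x}(x)=V_T(x)$. This is the continuity-of-cost-in-policy argument and is standard.

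The substantive step is to make the choice $x\mapsto m^*_x$ measurable and then glue. Using joint continuity of $(x,m)\mapsto\cJ_T^m(x)$ (continuity in $m$ as above; continuity in $x$ from Assumptions \ref{Alipschitz}, \ref{Acost} and a Gronwall bound on (\ref{E1.1})), the value $V_T$ is continuous and the optimal correspondence $\Gamma(x):=\{m:\cJ_T^m(x)=V_T(x)\}$ has nonempty compact values and closed graph; the Kuratowski--Ryll-Nardzewski theorem then yields a Borel selection $x\mapsto m^*_x\in\Gamma(x)$ defined for \emph{every} $x$. Realizing all $m^*_x$ on one canonical space carrying $W$, with $X_0$ on an independent factor, I set $m^*:=m^*_{X_0}$. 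Since $X_0$ is $\sF_0$-measurable and independent of the increments $W_t-W_s$ while each $m^*_x$ is non-anticipative, $m^*$ is itself non-anticipative, so $m^*\in\cM(T)$; and for a deterministic start $X_0=x$ (so that no averaging over distinct initial laws intervenes) one has $m^*=m^*_x$ a.s., whence (\ref{E1.1}) is driven from $\delta_x$ exactly as in the defining problem for $m^*_x$ and $\cJ_T^{m^*}(x)=\cJ_T^{m^*_x}(x)=V_T(x)$ for every $x\in\Rd$.

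The main obstacle will be the selection. One must equip the admissible relaxed controls with a standard-Borel (ideally Polish) structure under which the cost is jointly measurable and selections preserve adaptedness to the Brownian filtration. The cleanest route restricts a priori to deterministic open-loop relaxed controls $\fM(T)$, which is compact metric so that Kuratowski--Ryll-Nardzewski applies immediately and the glued control is trivially non-anticipative, at the cost of an auxiliary lemma that open-loop relaxed controls already attain $V_T(x)$ for Dirac initial data. Working directly in $\cM(T)$ with the stable topology avoids that lemma but demands extra care to verify that $m^*_{X_0}$ remains non-anticipative after the selection.
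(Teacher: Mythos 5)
Your first step coincides with the paper's entire proof: the paper shows that $m\mapsto\cJ_T^m(x)$ is continuous on the compact space $\cM(T)$ (via Gronwall/BDG $L^2$ estimates on the difference of strong solutions, with the Skorokhod representation theorem handling the relaxed-control term, rather than your tightness-plus-martingale-problem identification, which is more delicate for McKean--Vlasov dynamics since the marginal laws enter the coefficients) and then takes a minimizer for the \emph{fixed} $x$. Notably, the paper performs no measurable selection and no gluing over initial conditions: strictly read, its proof establishes only the pointwise statement ``for each $x$ there exists a minimizer,'' and the ``$\exists\, m^*\ \forall x$'' quantifier order in the theorem is simply asserted. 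So your diagnosis that the literal statement requires an extra splicing argument is a fair observation about the paper, not a gap in your reproduction of its core argument.

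However, your proposed repair contains a step that would fail. The route you call cleanest --- restricting to deterministic open-loop relaxed controls in $\fM(T)$ so that Kuratowski--Ryll-Nardzewski applies on a compact Polish space --- rests on the auxiliary lemma that such controls attain $V_T(x)$ for Dirac initial data, and that lemma is false in general: a deterministic relaxed control cannot correlate with the driving noise. Take $d=1$, $b(x,\mu,\zeta)=\zeta$, $\upsigma\equiv 1$, $\Act=[-1,1]$, and $c_T(x,\mu)=|x|^2$ truncated at a large level so that Assumption \ref{Acost} holds, with $x=0$ and $c\equiv 0$. Under any deterministic relaxed control the solution of (\ref{E1.1}) is Gaussian with variance $T$, so the cost is at least (approximately, up to the truncation) $\bigl(\int_0^T\bar\zeta_t\,dt\bigr)^2+T\ge T$, whereas the feedback $\zeta=-\sign(x)$ gives, by It\^o's formula, $\bE[X_T^2]=T-2\int_0^T\bE|X_t|\,dt<T$ strictly. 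Your alternative route --- selecting directly in $\cM(T)$ --- is the one that could work, but it is precisely where the real difficulty sits and you leave it open: $\cM(T)$ is a set of $\fM(T)$-valued \emph{random variables}, so one must fix a standard Borel structure on it under which $(x,m)\mapsto\cJ_T^m(x)$ is jointly measurable, the argmin correspondence $\Gamma$ has measurable graph, and the selected family $x\mapsto m^*_x$ can be realized jointly measurably in $(x,\omega)$ with non-anticipativity preserved; none of this is carried out. In summary: as a proof of what the paper actually proves (existence of an optimal relaxed control for each fixed $x$), your argument is complete and essentially identical to the paper's; as a proof of the literal uniform-in-$x$ statement, the splicing step remains a genuine, unfilled gap, and one of your two suggested closures is unsound.
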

\begin{proof}
Let $m^n \to m$ in $\cM(T)$,\, and $X_n, X$ be the unique solution of (\ref{E1.1}) under $m^n, m$ respectively\,. Then, by the Cauchy-Schwarz inequality, we have

\begin{align*}
\sup_{t\leq T}\abs{X_t - X^n_t}^2 =& 3\Big(\sup_{t\leq T}\abs{\int_{0}^{t} b(X_s, \sL(X_s), m_s) ds - \int_{0}^{t} b(X_s, \sL(X_s), m_s^n) ds}^2 \nonumber\\
  &+ \sup_{t\leq T} \abs{\int_{0}^{t} b(X_s, \sL(X_s), m_s^n) ds - \int_{0}^{t} b(X_s^n, \sL(X_s^n), m_s^n) ds}^2 \nonumber\\
  & +  \sup_{t\leq T}\abs{\int_{0}^{t} \upsigma(X_s, \sL(X_s)) d W_s - \int_{0}^{t}\upsigma(X_s^n, \sL(X_s^n)) d W_s}^2\Big)
\nonumber\\
\leq & 3\Big(\sup_{t\leq T}\abs{\int_{0}^{t} b(X_s, \sL(X_s), m_s) ds - \int_{0}^{t} b(X_s, \sL(X_s), m_s^n) ds}^2 \nonumber\\
 &+ T\int_{0}^{T} \abs{b(X_s, \sL(X_s), m_s^n) ds - b(X_s^n, \sL(X_s^n), m_s^n)}^2 ds \nonumber\\
 & +  \sup_{t\leq T}\abs{\int_{0}^{t} \left(\upsigma(X_s, \sL(X_s)) - \upsigma(X_s^n, \sL(X_s^n))\right) d W_s}^2\Big)
\end{align*}
Since $b$, $\upsigma$ are bounded, by the Burkholder-Davis-Gundy (BDG) inequality (\cite[p.33]{ABG-book}), we obtain
\begin{align*}
\mathbb{E}[\sup_{t\leq T}\abs{X_t - X_t^n}^2] &\leq 3\Big( \hat{\eta}_{b,n} + T\mathbb{E}[\int_{0}^{T} \abs{b(X_s, \sL(X_s), m_s^n) ds - b(X_s^n, \sL(X_s^n), m_s^n)}^2 ds] \nonumber\\
  & +  \hat{C}_2\mathbb{E}[\int_{0}^{T} \abs{\left(\upsigma(X_s, \sL(X_s)) - \upsigma(X_s^n, \sL(X_s^n))\right)}^2 d s \Big)\nonumber\\
&\leq 3\Big(\hat{\eta}_{b,n} + 2C_1^2(T + \hat{C}_2)\mathbb{E}[\int_{0}^{T} \left(\abs{X_s -X_s^n}^2 + W_1(\sL(X_s), \sL(X_s^n))^2 \right) ds]\Big),  
\end{align*} where the constant $\hat{C}_2 >0$ is obtained from the BDG inequality and
\begin{align*}
\hat{\eta}_{b,n} = \mathbb{E}[\sup_{t\leq T}\abs{\int_{0}^{t} b(X_s, \sL(X_s), m_s) ds - \int_{0}^{t} b(X_s, \sL(X_s), m_s^n) ds}^2]\,.
\end{align*}

This implies 
\begin{align}\label{EThExistenceB}
\mathbb{E}[\sup_{t\leq T}\abs{X_t - X_t^n}^2] &\leq 3\Big(\hat{\eta}_{b,n} + 4C_1^2(T + \hat{C}_2)\int_{0}^{T}\mathbb{E}[\sup_{s\leq t}\abs{X_s - X_s^n}^2] dt\Big),
\end{align}

Thus, by the Gronwall's inequality, from (\ref{EThExistenceB}) we deduce that 
\begin{equation}\label{EThExistenceC}
\mathbb{E}[\sup_{t\leq T}\abs{X_t - X_t^n}^2] \leq 3\hat{\eta}_{b,n}e^{12TC_1^2(T + \hat{C}_2)}
\end{equation}
We next analyze the convergence of $\hat{\eta}_{b,n}$ under weak convergence of relaxed controls in the Young topology. Let $$g_n(t)
:=
\int_0^t b(X_s,\sL(X_s),m_s^n)\,ds,
\qquad
g(t)
:=
\int_0^t b(X_s,\sL(X_s),m_s)\,ds.$$
Since $m^n \to m$ in $\cM(T)$, using Skorokhod representation theorem, we have $m^n \to m$ weakly (almost surely) along a subsequence. Now, since $b$ is bounded and continuous, for every fixed $t\in[0,T]$, $g_n(t) \to g(t), \quad \text{a.s.}$

Moreover, since $b$ is bounded, for all $s,t\in[0,T]$ with $s \leq t$, we have
\[
|g_n(t)-g_n(s)|
\le
\|b\|_\infty |t-s|,
\qquad
|g(t)-g(s)|
\le
\|b\|_\infty |t-s|,
\]
Thus, the family $\{g_n\}_{n\ge1}\cup\{g\}$ is equicontinuous on $[0,T]$. Since $g_n(t)\to g(t)$ pointwise for every $t\in[0,T]$, and the family is equicontinuous, a standard compactness argument implies
\[
\sup_{t\le T}|g_n(t)-g(t)|
\longrightarrow 0,
\qquad \text{a.s.}
\]
Therefore,
\[
\sup_{t\le T}
\left|
\int_0^t b(X_s,\sL(X_s),m_s^n)\,ds
-
\int_0^t b(X_s,\sL(X_s),m_s)\,ds
\right|^2
\longrightarrow 0,
\qquad \text{a.s.}
\]
Finally, boundedness of $b$ yields
\[
\sup_{t\le T}|g_n(t)-g(t)|^2
\le
4T^2\|b\|_\infty^2,
\]
hence the dominated convergence theorem gives
\begin{equation}\label{EThExistenceC1A}
\hat{\eta}_{b,n} = \mathbb E\!\left[
\sup_{t\le T}
\left|
\int_0^t b(X_s,\sL(X_s),m_s^n)\,ds
-
\int_0^t b(X_s,\sL(X_s),m_s)\,ds
\right|^2
\right]
\longrightarrow 0.
\end{equation}
Therefore, letting $n\to \infty$, from (\ref{EThExistenceC}), we obtain 
\begin{equation}\label{EThExistenceD}
\lim_{n\to\infty}\mathbb{E}[\sup_{t\leq T}\abs{X_t - X_t^n}^2] = 0\,.
\end{equation}
Moreover, since $c, c_T$ are Lipschitz continuous (see Assumption~\ref{Acost}), we have
\begin{align}\label{EcostConver1A}
\abs{\cJ_T^{m^n}(x) - \cJ_T^{m}(x)} =& \abs{\mathbb{E}_{x} \left[ \int_0^T c(X_t^n, \sL(X_t^n), m^n_t) dt + c_T(X_T^n, \sL(X_T^n)) \right] \nonumber\\
&- \mathbb{E}_{x} \left[ \int_0^T c(X_t, \sL(X_t), m_t) dt + c_T(X_T, \sL(X_T)) \right]}\nonumber\\
\leq & \mathbb{E}_{x} \left[ \int_0^T \abs{\left(c(X_t^n, \sL(X_t^n), m^n_t) - c(X_t, \sL(X_t), m_t^n)\right)} dt \right] \nonumber\\
&+ \mathbb{E}_{x} \left[\abs{c_T(X_T^n, \sL(X_T^n)) - c_T(X_T, \sL(X_T))} \right]\nonumber\\
&+\abs{\mathbb{E}_{x}\left[\int_0^T \Big(c(X_t, \sL(X_t), m^n_t) - c(X_t, \sL(X_t), m_t)\Big) dt\right]}\nonumber\\
\leq & \sqrt{T}\sqrt{\mathbb{E}_{x} \left[ \int_0^T \abs{\left(c(X_t^n, \sL(X_t^n), m^n_t) - c(X_t, \sL(X_t), m_t^n)\right)}^2 dt \right]} \nonumber\\
&+ \sqrt{\mathbb{E}_{x} \left[\abs{c_T(X_T^n, \sL(X_T^n) - c_T(X_T, \sL(X_T))}^2\right]} +\hat{\eta}_{c,n}\nonumber\\
\leq & \sqrt{T}\sqrt{\mathbb{E}_{x} \left[ C_3^2\int_0^T \abs{\left(\abs{X_t^n -X_t}+W_1\left(\sL(X_t^n), \sL(X_t)\right)\right)}^2 dt \right]} \nonumber\\
&+ \sqrt{\mathbb{E}_{x} \left[C_3^2\abs{\left(\abs{X_T^n -X_T}+W_1\left(\sL(X_T^n), \sL(X_T)\right)\right)}^2\right]} +\hat{\eta}_{c,n}\nonumber\\
\leq & \sqrt{T}C_3\sqrt{2\mathbb{E}_{x} \left[T\sup_{t\leq T} \abs{X_t^n -X_t}^2\right]} + C_3\sqrt{2\mathbb{E}_{x} \left[\abs{X_T^n -X_T}^2\right]} +\hat{\eta}_{c,n}\nonumber\\
\leq & \sqrt{2}C_3(T + 1)\sqrt{\mathbb{E}[\sup_{t\leq T}\abs{X_t - X_t^n}^2]} + \hat{\eta}_{c,n}\,,
\end{align} 
where $\hat{\eta}_{c,n} := \abs{\mathbb{E}_{x}\left[\int_0^T \Big(c(X_t, \sL(X_t), m^n_t) - c(X_t, \sL(X_t), m_t)\Big) dt\right]}$\,.

Again, by the Skorokhod representation theorem and the dominated convergence theorem (as in (\ref{EThExistenceC1A})), we see that the second term of the above inequality converges to zero. Thus, from (\ref{EThExistenceD}), we obtain the map $m\mapsto \cJ(m, x)$ is continuous on $\cM(T)$. Since $\cM(T)$ is compact, we have there exists $m^*\in \cM(T)$  such that (\ref{EThExistenceA}) holds. This completes the proof of the theorem\,.
\end{proof}
\begin{remark}
In the proof of the above theorem, we have shown that $m\mapsto \cJ^{m}(x)$ continuous on $\cM(T)$. In addition to that we have shown that for any $T>0$, the map $m\in \cM(T)$ goes to the associated solution of the equation (\ref{E1.1}) is continuous in quadratic mean (in particular in probability)\,.

\end{remark}
\begin{remark}
The PDE / HJB / master-equation approach \cite{Cosso2021MasterBE}, lifts the problem to the space of probability measures and derives a dynamic-programming PDE (or the master equation) for the value; this approach provides existence and uniqueness of value function, verification theorems and existence of optimal feedbacks policies, but it requires much stronger structural and regularity assumptions, e.g., smoothness of the drift (uniform Lipschitz continuity) and diffusion matrix (continuously differentiable) (see \cite{Cosso2021MasterBE}), uniform nondegeneracy or strong ellipticity, in certain settings monotonicity or convexity. These assumptions ensure that the lifted HJB/master equation admits classical (or well-behaved viscosity) solutions \cite{Cosso2021MasterBE}.   
\end{remark}
\subsection{Near optimality of piece-wise constant policies}
In view of the above continuity and denseness results we have following near-optimality result.
\begin{lemma}\label{LNearOptiA}
Suppose Assumptions \ref{Alipschitz}, \ref{Abounded} and \ref{Acost} hold. Then for any $\epsilon > 0$ there exists $m^{*\epsilon}\in\cM(T)$ which is piece-wise constant such that for all $x\in \Rd$
\begin{equation}\label{ELNearOptiA}
\cJ_T^{m^{*\epsilon}}(x) \leq V_T(x) + \epsilon \,.
\end{equation}
\end{lemma}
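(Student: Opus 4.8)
The plan is to build $m^{*\epsilon}$ by \emph{chattering} the optimal relaxed control supplied by Theorem~\ref{ThExistenceA} and then pushing the approximation through the cost-continuity that was already established in that theorem's proof. Concretely, I would first invoke Theorem~\ref{ThExistenceA} to fix an optimal $m^*\in\cM(T)$ with $V_T(x)=\cJ_T^{m^*}(x)$. I would then apply the chattering Lemma~\ref{LChatt} to $m^*$, producing a sequence $\{m^{*,n}\}$ of non-anticipative piece-wise constant precise controls for which \eqref{ELChatt1} holds almost surely for every $f\in\cC([0,T]\times\Act)$. The crucial observation is that \eqref{ELChatt1} is \emph{exactly} the assertion that $m^{*,n}\to m^*$ in the compact weak topology of $\fM(T)$, i.e.\ $m^{*,n}\to m^*$ in $\cM(T)$ almost surely.

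Granting this, the continuity of the map $m\mapsto\cJ_T^{m}(x)$ on $\cM(T)$, established in the proof of Theorem~\ref{ThExistenceA} through \eqref{EThExistenceD}--\eqref{EcostConver1A}, applies and yields $\cJ_T^{m^{*,n}}(x)\to\cJ_T^{m^*}(x)=V_T(x)$. Choosing $n$ so large that $\cJ_T^{m^{*,n}}(x)-V_T(x)<\epsilon$ and setting $m^{*\epsilon}\df m^{*,n}$ gives \eqref{ELNearOptiA}. If one prefers to re-derive rather than cite the continuity, the step requiring care is that the relaxed controls converge only \emph{weakly}, so one must work with the time-integrated drift: since $s\mapsto(X_s(\omega),\sL(X_s))$ is pathwise continuous, $(s,\zeta)\mapsto b(X_s(\omega),\sL(X_s),\zeta)$ is an admissible test function, whence $\int_0^t\bigl(b(X_s,\sL(X_s),m^*_s)-b(X_s,\sL(X_s),m^{*,n}_s)\bigr)\D s\to0$; this family is equi-Lipschitz in $t$ by boundedness of $b$, so the convergence is uniform in $t\le T$, which together with dominated convergence drives \eqref{EThExistenceD}.

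The main obstacle I anticipate is the \emph{uniformity in} $x$. The chattering sequence $m^{*,n}$ is manufactured from $m^*$ and the driving noise alone, hence is independent of $x$, and $m^{*,n}\to m^*$ in $\cM(T)$ independently of $x$; nevertheless, exhibiting a single $m^{*\epsilon}$ that satisfies \eqref{ELNearOptiA} simultaneously for all $x\in\Rd$ requires the cost convergence above to be uniform in $x$. The delicate point is to bound the chattering error $\mathbb{E}\bigl[\sup_{t\le T}|X^{x}_t-X^{x,n}_t|^2\bigr]$ uniformly over $x$, and I would handle this by exploiting the uniform boundedness of $b,\upsigma,c,c_T$ (Assumptions~\ref{Abounded} and \ref{Acost}), which renders the Gronwall constant and the terminal bound appearing in \eqref{EcostConver1A} independent of $x$; absent such uniformity, the conclusion would only hold per fixed $x$, with $m^{*\epsilon}$ (through the index $n$) possibly depending on $x$.
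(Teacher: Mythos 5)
Your proposal is correct and follows essentially the same route as the paper's proof: fix the optimal relaxed control from Theorem~\ref{ThExistenceA}, approximate it by non-anticipative piece-wise constant controls via the chattering Lemma~\ref{LChatt} (i.e., denseness in $\cM(T)$), and conclude using the continuity of $m\mapsto\cJ_T^{m}(x)$ established in the proof of Theorem~\ref{ThExistenceA}. Your explicit treatment of uniformity in $x$ (via the $x$-independent Gronwall and cost constants coming from Assumptions~\ref{Abounded} and \ref{Acost}) fills in a point the paper leaves implicit, but it does not alter the approach.
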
 
\begin{proof}
From the proof the Theorem~\ref{ThExistenceA}, we have the map $m\mapsto \cJ_T^{m}(x)$ continuous on $\cM(T)$. Moreover, Lemma~\ref{LChatt}, implies that the space of non-anticipative piece-wise constant policies are dense in $\cM(T)$. Since $V_T(x) = \cJ_T^{m^*}(x)$, above continuity and denseness results ensure existence of a piece-wise constant policy satisfying (\ref{ELNearOptiA})\,. This completes the proof\,.
\end{proof}

Similarly, for the discounted cost case, we have the following near-optimality result.

\begin{lemma}\label{LNearOptiDiscounted1A}
Suppose Assumptions \ref{Alipschitz}, \ref{Abounded} and \ref{Acost} hold. Then for any $\epsilon > 0$ there exists $m^{*\epsilon}\in\cM(\infty)$ which is piece-wise constant such that for all $x\in \Rd$
\begin{equation*}
\cJ_{\alpha}^{m^{*\epsilon}}(x) \leq V_{\alpha}(x) + \epsilon \,.
\end{equation*}
\end{lemma} 
\begin{proof}
Let $m^n\to m$ in $\cM(\infty)$ and and $X_n, X$ be the unique solution of (\ref{E1.1}) under $m^n, m$ respectively\,. Since $c, c_T$ are bounded and Lipschitz continuous, we have
\begin{align*}
\abs{\cJ_{\alpha}^{m^n}(x) - \cJ_{\alpha}^{m}(x)} =& \abs{\mathbb{E}_{x} \left[ \int_0^T e^{-\alpha t}\left(c(X_t^n, \sL(X_t^n), m^n_t) -c(X_t, \sL(X_t), m_t)\right) dt\right] \nonumber\\
&+ \mathbb{E}_{x} \left[ \int_T^{\infty}e^{-\alpha t}\left(c(X_t^n, \sL(X_t^n), m^n_t) -c(X_t, \sL(X_t), m_t)\right) dt\right]}\nonumber\\
\leq & \abs{\mathbb{E}_{x} \left[ \int_0^T e^{-\alpha t}\left(c(X_t^n, \sL(X_t^n), m^n_t) -c(X_t, \sL(X_t), m_t)\right) dt\right]} \nonumber\\
&+ 2\frac{\|c\|_{\infty}}{\alpha}e^{-\alpha T}
\end{align*}
Hence, for any $\epsilon > 0$, we can find $T_{\epsilon} > 0$ such that
\begin{align}\label{EConvergenceConti1A}
\abs{\cJ_{\alpha}^{m^n}(x) - \cJ_{\alpha}^{m}(x)}
\leq & \abs{\mathbb{E}_{x} \left[ \int_0^{T_{\epsilon}} e^{-\alpha t}\left(c(X_t^n, \sL(X_t^n), m^n_t) -c(X_t, \sL(X_t), m_t)\right) dt\right]} + \frac{\epsilon}{2}
\end{align} Thus, following the arguments as in the proof of Theorem~\ref{ThExistenceA}, we have the map $m\mapsto \cJ_{\alpha}^{m}(x)$ is continuous on $\cM(\infty)$\,. By the density of the piece-wise constant policies, we deduce that for any $\epsilon > 0$, there exists a piece-wise constant policy $m^{*\epsilon}\in \cM(\infty)$ such that
\begin{equation*}
\cJ_{\alpha}^{m^{*\epsilon}}(x) \leq V_{\alpha}(x) + \epsilon \,.
\end{equation*}    
\end{proof}

\section{Discrete-Time Approximate McKean-Vlasov Model and Near Optimality}\label{sec:discreteapproxiMcVla}

In this section, using piece-wise constant policies we will construct a sequence of discrete-time controlled processes which will then approximate the solutions of (\ref{E1.1}). 

\subsection{Discrete-Time Approximate Model} \label{SdiscreteSDE}

Let $h >0$ and define $t_k = k h$ for $k = 0, 1, \dots ,$. Now, we approximate the SDE \eqref{E1.1} using the following Euler-Maruyama scheme. Given a piece-wise constant $U^h$ (as in \eqref{Econtrol}), the Euler-Maruyama type approximation for the controlled McKean-Vlasov SDE (\ref{E1.1}) is defined as follows:

\begin{equation} \label{Eeuler_maruyama}
\begin{aligned}
X_{t_{i+1}}^h &= X_{t_i}^h + b(X_{t_i}^n, \sL(X_{t_i}^h), U_{t_i}^h) h + \upsigma(X_{t_i}^h, \sL(X_{t_i}^h)) \Delta W_i, \\
X_0^h &= x,
\end{aligned}
\end{equation}
where
\begin{itemize}
\item $X_0^h = x$.
\item $U_{t_i}^h$ is the discrete-time control at time $t_i$, taking values in $\pV$.
\item $\Delta W_i = W_{t_{i+1}} - W_{t_i}$ are independent and identically distributed (i.i.d.) Gaussian random vectors with mean 0 and covariance matrix $(h)I_d$, where $I_d$ is the $d \times d$ identity matrix.
\item $\sL(X_{t_i}^h)$ denotes the law of $X_{t_i}^h$. 
\end{itemize}

For simplicity, in the following we denote the samples $X_{t_i}^h$ by $X_{i}^h$\,. We note that in practice, the law of $X_{t_i}^h$ would be estimated from samples of $X_{t_i}^h$.

\begin{remark}\label{ER4.1}
To make this scheme implementable, we need to approximate the law $\sL(X_i^h)$. A common approach is to use a particle system. Let $X_i^{h,j}$, $j=1, \ldots, N$, be $N$ independent samples satisfying
\begin{equation}
\label{Eparticle_system}
\begin{cases}
X_0^{h,j} = x, \\
X_{i+1}^{h,j} = X_i^{h,j} + b(X_i^{h,j}, \hat{\mu}_i^h, U_{t_i}^h) h + \sigma( X_i^{h,j}, \hat{\mu}_i^h) \Delta W_i^j,
\end{cases}
\end{equation}
where $\Delta W_i^j = W_{t_{i+1}}^j - W_{t_i}^j$ and $W^j$ are independent copies of the Brownian motion $W$, and $\hat{\mu}_i^h = \frac{1}{N} \sum_{j=1}^N \delta_{X_i^{h,j}}$ is the empirical measure. 
\end{remark}

Under this discrete-time setup, the associated discrete-time cost evaluation criteria are given by:\\ \\
\textbf{Discrete-time Finite Horizon Cost:}
\begin{equation} \label{Ediscrete_cost}
\cJ_{T,h}^{U^h}(x) = \mathbb{E}_{x}^{U^h} \left[ \sum_{k=0}^{N_T^h-1} c(X_k^h, \mathcal{L}(X_k^h), U_k^h) h + c_T(X_{N_T^h}^h, \sL(X_{N_T^h}^h)) \right],
\end{equation}
where $N_T^h := \sup\{n: nh\leq T \}$ and $U^h = (U_0^h, U_1^h, \dots )$ is a sequence of controls. For the discrete-time model, an {\em admissible policy} is a sequence of control functions $\{U^h_k,\, k\in \ZZ_{+}\}$ such that $U^h_k$ is measurable with respect to the $\sigma$-algebra generated by the information variables
$
I_k^h=\{X_{[0,k]}^h,\sL(X_{[0,k]}^h), U_{[0,k-1]}^h\},\,\, k \in \NN,\,\, I_0^h=\{X_0^h\},
$ that is
\begin{equation}
\label{Econtrol}
U_k^h=v_k^h(I_k^h),\quad k\in \ZZ_{+},
\end{equation}
where $v_k^h$ is a $\pV$-valued measurable function for $k\in \ZZ_{+}$\,. We define $\Uadm^h$ to be the set of all such admissible policies. Let $\Um^h := \{U^h\in \Uadm^h: U_k^h = v_k^h(X_k^h, \sL(X_k^h))\,\,\text{for some measurable map}\,\, v_k^h:\Rd\times \sP(\Rd)\to \pV\}$ be the set of all Markov policies\,. If the function $v_k^h$ does have an explicit time dependence, it is called stationary Markov policy, we denote it by $\Usm^h := \{U^h\in \Uadm^h: U_k^h = v^h(X_k^h, \sL(X_k^h))\,\,\text{for some measurable map}\,\, v^h:\Rd\times \sP(\Rd)\to \pV\}$\,. A policy $v^h\in \Usm^h$ is said to be a deterministic stationary Markov policy if $v^h(x, \mu) = \delta_{f(x, \mu)}$ for some measurable map $f:\Rd \times \sP(\Rd)\to \Act$. Let $\Udsm^h$ be the space of all deterministic stationary Markov policies\,.

The discrete-time McKean-Vlasov control problem is to find a control sequence $U^{h, *}$ that minimizes $\cJ_{T,h}^{U^h}(x)$: that is 
\begin{equation} \label{Ediscrete_optimal_control}
V_T^h(x) \df \inf_{U^h \in \Uadm^h} \cJ_{T,h}^{U^h}(x) = \cJ_{T,h}^{U^{h, *}}(x).
\end{equation}
Here $V_T^h(x)$ is the discrete-time value function. 

\begin{remark}
As a consequence of the construction, every element of $\Uadm^h$ lies in $\cM(\infty)$\,.
\end{remark}

We now show that the infimum is attained, in particular, arguing as \cite{pham2016discrete} we have there exists an optimal control $U^{h, *} \in \Um^h$ which is Markov such that $V_T^h(x) = \cJ_{T,h}^{U^{h, *}}(x)$. To this end, we also obtain a weak Feller continuity property, which will be utilized further later in the paper in Section \ref{FiniteModApp} in the context of finite model approximations. In particular, we have the following theorem

\begin{theorem}\label{TOptiDiscMcKV1A}
Suppose Assumptions \ref{Alipschitz}, \ref{Abounded} and \ref{Acost} hold. Then
\begin{itemize}
\item[(i)] The map \[ \Rd \times {\cal P}(\Rd) \times \mathbb{U} \ni (x,\mu,\zeta) \mapsto \bE[g(X^h_{k+1}) | X^h_{k}=x, {\cal L}(X^h_k)=\mu,U^h_{k}=\zeta ] \in \mathbb{R},\]
is continuous for every bounded continuous $g: \Rd \to \RR$. That is, the discrete-time kernel is weakly continuous (or weak Feller).
\item[(ii)] There exists $U^{h, *} \in \Um^h$ (which is Markov) such that
\begin{equation}\label{ETOptiDiscMcKV1A}
V_T^h(x) = \cJ_{T,h}^{U^{h, *}}(x)\,.
\end{equation}
\end{itemize}
\end{theorem}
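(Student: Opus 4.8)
The plan is to prove the two parts in turn, with the weak Feller property of part~(i) feeding directly into the dynamic-programming argument of part~(ii).

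\textbf{Part (i).} Conditioning the Euler--Maruyama recursion \eqref{Eeuler_maruyama} on $X^h_k=x$, $\sL(X^h_k)=\mu$ and $U^h_k=\zeta$ reduces $X^h_{k+1}$ to an explicit function of a single Gaussian vector: writing $\Delta W_k=\sqrt{h}\,Z$ with $Z$ standard normal and $\gamma\df N(0,I_d)$ its law,
\[
\bE\big[g(X^h_{k+1})\mid X^h_k=x,\ \sL(X^h_k)=\mu,\ U^h_k=\zeta\big]=\int_{\Rd} g\big(x+b(x,\mu,\zeta)h+\sqrt{h}\,\upsigma(x,\mu)z\big)\,\gamma(\D z).
\]
I would then take $(x_n,\mu_n,\zeta_n)\to(x,\mu,\zeta)$ and note that, using continuity of $b$ in all three arguments (as already invoked in the proof of Theorem~\ref{ThExistenceA}) and of $\upsigma$ in its two arguments together with continuity of $g$, the integrand converges pointwise in $z$; being uniformly bounded by $\norm{g}_\infty$, dominated convergence yields convergence of the integrals, which is the asserted weak Feller continuity. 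On the relevant class of laws, which have uniformly bounded moments since $b,\upsigma$ are bounded, weak and $W_1$ convergence coincide, so the $W_1$-continuity supplied by Assumption~\ref{Alipschitz} is exactly what is needed.

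\textbf{Part (ii).} I would lift the problem to a deterministic control problem on $\cP(\Rd)$. Under any Markov policy the marginal flow $\mu_k\df\sL(X^h_k)$ evolves deterministically through the one-step pushforward
\[
\mu_{k+1}=\Phi(\mu_k,\zeta_k),\qquad \Phi(\mu,\zeta)(A)\df\int_{\Rd}\!\int_{\Act}\!\int_{\Rd}\Ind_A\!\big(x+b(x,\mu,a)h+\sqrt{h}\,\upsigma(x,\mu)z\big)\,\gamma(\D z)\,\zeta(x)(\D a)\,\mu(\D x)
\]
for Borel $A\subseteq\Rd$, where the stage control $\zeta_k$ is a measurable decision rule $\Rd\ni x\mapsto\zeta_k(x)\in\pV$, and the cost becomes $\sum_{k=0}^{N_T^h-1}h\!\int c(x,\mu_k,\zeta_k(x))\,\mu_k(\D x)+\int c_T(x,\mu_{N_T^h})\,\mu_{N_T^h}(\D x)$. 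Since $\mu_0=\delta_x$ one has $V_T^h(x)=W_0(\delta_x)$ for the value $W_k(\mu)$ of the lifted problem, which satisfies the backward recursion
\[
W_{N_T^h}(\mu)=\int c_T(x,\mu)\,\mu(\D x),\qquad W_k(\mu)=\inf_{\zeta}\Big[\,h\!\int c(x,\mu,\zeta(x))\,\mu(\D x)+W_{k+1}\big(\Phi(\mu,\zeta)\big)\Big].
\]

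To run this recursion and extract an optimal Markov policy I would argue inductively that each $W_k$ is continuous and that the infimum is attained by a measurable selector $\mu\mapsto\zeta^*_k(\cdot\,;\mu)$. The set of decision rules, viewed as Young measures / relaxed controls, is compact in the stable topology (just as $\cM(T)$ is compact in Section~\ref{Sdiscrete}); extending the kernel of part~(i) from precise actions $\zeta\in\Act$ to relaxed actions $u\in\pV$ by integration is continuous because $a\mapsto b(x,\mu,a)$ is bounded continuous, so both $\zeta\mapsto\Phi(\mu,\zeta)$ and $\zeta\mapsto\int c(x,\mu,\zeta(x))\,\mu(\D x)$ are continuous. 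Combined with the inductive continuity of $W_{k+1}$, the Bellman objective is continuous in $\zeta$ over a compact set and hence attains its minimum; Berge's maximum theorem propagates continuity to $W_k$, and a measurable selection theorem furnishes $\zeta^*_k(\cdot\,;\mu)$ depending measurably on $\mu$. Assembling the selectors yields $v^h_k(x,\mu)=\zeta^*_k(x;\mu)$, i.e. a policy $U^{h,*}\in\Um^h$ achieving $V_T^h(x)=\cJ_{T,h}^{U^{h,*}}(x)$, exactly as in \cite{pham2016discrete}.

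\textbf{Main obstacle.} Part~(i) is essentially routine. The crux is the measurable-selection/compactness step in part~(ii): one must fix a topology on decision rules under which the rule space is compact while $\Phi$ and the running cost are jointly continuous, and then verify that continuity (or at least lower semicontinuity) of the value function is preserved across the recursion. This is precisely where the weak Feller property of part~(i) is indispensable, and it is the only place where genuine care, rather than bookkeeping, is required.
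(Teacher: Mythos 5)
Your proposal is correct and takes essentially the same approach as the paper: part (i) is the same one-step weak-continuity argument (you integrate $g$ directly against the Gaussian law and use dominated convergence, whereas the paper shows mean-square convergence of the next state and then invokes Skorokhod plus dominated convergence --- equivalent routes), and part (ii) is the same lifting to a deterministic measure-valued control problem with Young-topology actions, compactness, and measurable selection, which the paper compresses into citations of \cite{sanjarisaldiy2024optimality} and \cite{HernandezLermaMCP} while you sketch the Bellman recursion and Berge argument explicitly. Note that both your argument and the paper's implicitly use continuity of $b$ in the control variable (not literally stated in Assumption~\ref{Alipschitz}), which you correctly flag as already invoked in the proof of Theorem~\ref{ThExistenceA}.
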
 
\begin{proof} Let $(x_n,\mu_n,\zeta_n)\to(x,\mu,\zeta)$ in $\Rd\times \mathcal{P}(\Rd)\times\Act$, where the convergence of $\mu_n$ to $\mu$ is in $W_1$. We must show, as $n\to \infty$
\begin{equation}\label{thm:Econv1A}
\mathbb{E}\left[g(X^h_{k+1})\mid X^h_{k} = x_n, \sL(X^h_{k}) = \mu_n, U^h_{k} = \zeta_n \right] \;\to\; \mathbb{E}\left[ g(X^h_{k+1})\mid X^h_{k} = x, \sL(X^h_{k}) = \mu, U^h_{k} = \zeta\right]
\end{equation}
Now let
\begin{align*}
\bar{X}^{h,n}_{k+1} &:= x_n + b(x_n,\mu_n,\zeta_n)h + \sigma(x_n,\mu_n)\,\Delta W_k,\\
X^h_{k+1}&:= x + b(x,\mu,\zeta)h + \sigma(x,\mu)\,\Delta W_k,
\end{align*}

Thus, we have
\[
\bar{X}^{h,n}_{k+1} - X^h_{k+1} = (x_n - x) + h\big(b(x_n,\mu_n,\zeta_n) - b(x,\mu,\zeta)\big)
     + \big(\sigma(x_n,\mu_n)-\sigma(x,\mu)\big)\,\Delta W_k.
\]
Using $(a+b+c)^2\le3(a^2+b^2+c^2)$, we get
\begin{align*}
\mathbb{E}\big[|\bar{X}^{h,n}_{k+1} - X^h_{k+1}|^2\big]
\le & 3|x_n - x|^2 + 3h^2 |b(x_n,\mu_n,\zeta_n)-b(x,\mu,\zeta)|^2\nonumber \\
 &\,\,\, + 3|\sigma(x_n,\mu_n)-\sigma(x,\mu)|^2\,\mathbb{E}\big[|\Delta W_k|^2\big].
\end{align*}
By the Lipschitz continuity of \(b,\sigma\) and since \(\mathbb{E}[|\Delta W_k|^2] = h\), we get
\begin{align}\label{thm:Econv1B}
\mathbb{E}\big[|\bar{X}^{h,n}_{k+1} - X^h_{k+1}|^2\big]
&\le 3|x_n - x|^2 + 2C_1(h^2 + h)\big(|x_n - x|^2 + W_1(\mu_n,\mu)^2\big)\nonumber\\
&\quad + 3h^2|b(x,\mu,\zeta_n)-b(x,\mu,\zeta)|^2.
\end{align}

Taking $n\to \infty$, it follows that the right hand side of (\ref{thm:Econv1B}) goes to $0$\,. Thus, $\bar{X}^{h,n}_{k+1} \to X^h_{k+1}$ weakly as $n\to \infty$\,. Therefore, by the Skorohod representation theorem and the dominated convergence theorem, we obtain (\ref{thm:Econv1A})\,.    

In view of the above weak Feller continuity result, dynamic programming recursions are well-defined via standard measurable selection conditions \cite{HernandezLermaMCP} as studied in \cite[Theorem 3]{sanjarisaldiy2024optimality}: One can define a controlled Markov model where $\mu_k = {\cal L}(X_k)$ is the state and the joint measure $\Theta_k = \{v \in {\cal P}(\mathbb{R}^d \times \mathbb{U}): v(dx\times \mathbb{U})=\mu_k(dx) \}$ serving as the {\it action} (endowed with the Young topology  \cite{balder1997consequences} \cite{bauerle2010optimal,bauerle2018optimal}\cite{florescu2012young}\cite{yuksel2023borkar}). In particular, $\Theta_k$ defines a control policy for $\mu_k$ almost every $x \in \Rd$ as a relaxed control policy. Then, one can show that $(\mu_k, \Theta_k)$ forms a controlled Markov chain and an optimal $\Theta_k$ is uniquely defined by $\mu_k$, leading to an optimal control. 
\end{proof}

The reader is also referred to \cite{pham2016discrete} for an earlier study which established the existence for such discrete-time models. 

\textbf{Discrete-time Discounted Cost:} For each $U^h\in \Uadm^h$, the associated discounted cost of the approximating discrete-time model is given by
\begin{equation}\label{EDiscreteDiscountA}
\cJ_{\alpha,h}^{U^h}(x, c) \,\df\, \Exp_x^{U^h} \left[\sum_{k=0}^{\infty} \beta^k hc(X_k^h, U_k^h) \mid X_{0}^h = x \right],\quad x\in\Rd\,,
\end{equation} where $\beta \df e^{-\alpha h}$\,. The optimal cost is defined as
\begin{equation}\label{EDiscreteDiscountB}
V_{\alpha}^h(x) = \cJ_{\alpha,h}^{*}(x, c) \,\df\, \inf_{U^h\in \Uadm^h}\cJ_{\alpha,h}^{U^h}(x, c)\,.
\end{equation}
For any discrete-time admissible policy $U^h\in \Uadm^h$ the interpolated continuous time policy $U^h(\cdot)$ is defined as
\begin{align}\label{EInterPol}
U^{h}(t)= U^h_k \text{ for } t\in[t_k, t_{k+1})\,.
\end{align}
The set of all interpolated continuous time admissible policies is denoted by $\bar{\Uadm}^h$ and set of all interpolated continuous time stationary Markov strategies is denoted by $\bar{\Uadm}_{\mathsf{sm}}^h$\,. Also, for any discrete-time Markov chain $\{X^h_k\}$, the associated continuous-time interpolated process $X^{h}(\cdot)$ is given by
\begin{align}\label{EInterState}
X^{h}(t)= X^h_k \text{ for } t\in[t_k, t_{k+1})\,.
\end{align} Thus, the sample paths of the continuous-time interpolated process $X^{h}(\cdot)$ are right continuous with left limits. This leads us to consider the function space 
\begin{equation*}
\sD(\Rd; 0, \infty)\df \{\omega:[0, \infty)\to \Rd \mid \omega(\cdot) \,\,\text{is right continuous and have left limits at every}\,\, t > 0\}\,.
\end{equation*} We will consider the space $\sD(\Rd; 0, \infty)$ endowed with the Skorokhod topology (for details see \cite[Section~16, p. 166]{PBill-book})\,.

\subsection{Near Optimality and Convergence Analysis} \label{Sconvergence}
This section is dedicated to proving the convergence of the discrete-time value functions $V^h(x)$ and $V_{\alpha}^h(x)$ to the continuous-time value function $V(x)$ and $V_{\alpha}(x)$ respectively as $h \to 0$. Furthermore, we aim to show that a discrete-time optimal control sequence $U^{h,*}$ are near optimal in the continuous-time optimal system as the parameter of the discretization approaches to zero.
Now we prove the following convergence result which will play an important role in our discrete-time approximation problem.
\begin{theorem} \label{Thconvergence}
Suppose Assumptions \ref{Alipschitz} and \ref{Abounded} hold. Let $X_t$ be the solution of the controlled McKean-Vlasov SDE (\ref{E1.1}) under $U\in \Uadm$ and $X_{i}^{h}$ (with continuous time interpolated process $X^{h}(\cdot)$) be the solution of the Euler-Maruyama scheme (\ref{Eeuler_maruyama}) under the piece-wise constant policy $U^h\in \Uadm^h$ which approximates $U$ weakly a.s. (existence follows from Lemma~\ref{LChatt}). Then, we have
\begin{equation}\label{EConv1A}
 \lim_{h\downarrow 0}\mathbb{E}[\sup_{t \in [0, T]}|X_t - X^{h}(t)|^2] = 0,
\end{equation}
\end{theorem}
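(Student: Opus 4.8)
The plan is to compare the continuous-time McKean-Vlasov solution $X_t$ with the interpolated Euler-Maruyama process $X^h(t)$ by splitting the error into a time-discretization part and a control-approximation part, then closing the estimate with a Gronwall argument. First I would introduce the continuous-time interpolation of the scheme written as a stochastic integral: for $t \in [t_i, t_{i+1})$ set
\begin{equation*}
X^h(t) = x + \int_0^t b(X^h(\eta(s)), \sL(X^h(\eta(s))), U^h(s))\, ds + \int_0^t \upsigma(X^h(\eta(s)), \sL(X^h(\eta(s))))\, dW_s,
\end{equation*}
where $\eta(s) := t_i$ for $s \in [t_i, t_{i+1})$ is the piecewise-constant time grid and $U^h(s)$ is the interpolated policy from (\ref{EInterPol}). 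This continuous embedding of the discrete scheme lets me subtract it directly from (\ref{E1.1}) and apply the BDG inequality to the diffusion term exactly as in the proof of Theorem~\ref{ThExistenceA}.

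The core estimate would bound $\mathbb{E}[\sup_{s \le t}|X_s - X^h(s)|^2]$ by inserting and subtracting intermediate terms so as to isolate three sources of error: (a) the \emph{freezing error} $b(X^h(\eta(s)), \dots) - b(X^h(s), \dots)$ and its $\upsigma$-analogue, coming from evaluating coefficients at the grid point rather than the current time; (b) the \emph{coefficient Lipschitz error} $b(X^h(s), \sL(X^h(s)), U^h(s)) - b(X_s, \sL(X_s), U^h(s))$, controlled by Assumption~\ref{Alipschitz} through $|X_s - X^h(s)| + W_1(\sL(X_s), \sL(X^h(s)))$; and (c) the \emph{control-approximation error} $b(X_s, \sL(X_s), U^h(s)) - b(X_s, \sL(X_s), U_s)$, which vanishes in the limit because $U^h \to U$ weakly almost surely (as guaranteed by the chattering Lemma~\ref{LChatt}). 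Using boundedness of the coefficients (Assumption~\ref{Abounded}), the freezing error in (a) is $\order(h)$ since $\mathbb{E}[|X^h(s) - X^h(\eta(s))|^2] \le C h$ over one step; the Lipschitz term (b) feeds the Gronwall loop; and term (c) is handled by dominated convergence after the Skorokhod representation, just as $\hat\eta_n \to 0$ was treated in Theorem~\ref{ThExistenceA}. Since $W_1(\sL(X_s), \sL(X^h(s)))^2 \le \mathbb{E}[|X_s - X^h(s)|^2]$, the Wasserstein terms collapse into the same quadratic error, and Gronwall's inequality then yields
\begin{equation*}
\mathbb{E}\Bigl[\sup_{t \le T}|X_t - X^h(t)|^2\Bigr] \le C\bigl(h + \tilde\eta_h\bigr)\, e^{CT},
\end{equation*}
with $\tilde\eta_h$ the control-approximation remainder, and letting $h \downarrow 0$ gives (\ref{EConv1A}).

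The main obstacle I expect is the interaction between the mean-field (Wasserstein) dependence and the control-approximation term (c) under the weak-almost-sure convergence of $U^h$ to $U$. Because the drift depends on the unknown law $\sL(X_s)$, which itself shifts as the policy changes, one cannot treat $\sL(X^h(s))$ as a fixed reference measure; the bound $W_1(\sL(X_s),\sL(X^h(s)))^2 \le \mathbb{E}[|X_s - X^h(s)|^2]$ is what decouples this, but verifying that the control-approximation remainder $\tilde\eta_h := \mathbb{E}[\int_0^T |b(X_s,\sL(X_s),U^h(s)) - b(X_s,\sL(X_s),U_s)|^2\, ds]$ genuinely tends to zero requires care: one must fix the reference path $X_s$ (driven by the \emph{limiting} control $U$), use the weak convergence $m^h_s \to m_s$ of the relaxed representations together with continuity and boundedness of $b$ in the control argument, and invoke the Skorokhod representation plus dominated convergence. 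The subtlety is that the integrand involves $U^h(s)$ only through the coefficients evaluated along the limit trajectory, so the weak convergence of the relaxed controls applies directly and the term is decoupled from the state error being estimated — this is precisely the structure that makes the Gronwall closure legitimate.
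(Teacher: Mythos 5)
Your proposal is correct in substance and relies on the same core machinery as the paper: split the error into a control-approximation part, a coefficient-Lipschitz part, and a discretization part; control the stochastic integrals by BDG; collapse the mean-field terms via $W_1(\sL(X_s),\sL(X^h(s)))^2 \le \mathbb{E}[|X_s - X^h(s)|^2]$; close with Gronwall; and send the control remainder $\tilde\eta_h \to 0$ by Skorokhod representation plus dominated convergence, exactly as $\hat\eta_n \to 0$ in Theorem~\ref{ThExistenceA}. The difference is in how the discrete scheme is embedded in continuous time. The paper works directly with the piecewise-constant interpolation (\ref{EInterState}): since $X^h(s)=X^h_{t_k}$ and $U^h(s)=U^h_{t_k}$ on $[t_k,t_{k+1})$, the Euler sums in (\ref{Eeuler_maruyama}) coincide \emph{exactly} with integrals of the interpolated process, so no freezing error appears at all; the only discretization-specific term is the tail increment $\int_{hN^h_t}^{t} b\,ds + \int_{hN^h_t}^{t}\upsigma\,dW_s$ of the true solution over the last partial interval, bounded using boundedness of $b,\upsigma$. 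You instead use the continuous It\^o embedding with coefficients frozen at grid points, which removes the tail term but introduces the freezing error $\mathbb{E}[|X^h(s)-X^h(\eta(s))|^2]\le Ch$ inside the $ds$-integrals; both routes feed Gronwall in the same way. One caveat you should address: your continuous embedding is \emph{not} the process the theorem refers to, since (\ref{EInterState}) defines $X^h(\cdot)$ to be piecewise constant. To literally conclude (\ref{EConv1A}) you need one more step bounding $\mathbb{E}[\sup_{t\le T}|X^h_{\mathrm{cont}}(t)-X^h_{\mathrm{pc}}(t)|^2]$, whose stochastic-integral part is a modulus-of-continuity estimate over intervals of length $h$ (of order $h\log(1/h)$, which still vanishes); this is routine but omitted in your plan, and it is the same subtlety that is absorbed into the paper's tail-term constant $\hat C_1 h$.
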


\begin{proof}
For any $t \leq T$, we have
\begin{align*}
X_t - X^h(t) =& \int_0^t b(X_s, \sL(X_s), U_s) ds + \int_0^t \upsigma(X_s, \sL(X_s)) dW_s - X^h(t),\nonumber\\
=& \int_{hN_t^h}^t b(X_s, \sL(X_s), U_s) ds + \int_{hN_t^h}^t \upsigma(X_s, \sL(X_s)) dW_s  \nonumber\\
  &+\int_{0}^{hN_t^h} b(X_s, \sL(X_s), U_s) ds - \sum_{k=0}^{N_t^h-1} b(X_{t_k}^h, \sL(X_{t_k}^h), U_{t_k}^h) h \nonumber\\
  & +  \int_{0}^{hN_t^h} \upsigma(X_s, \sL(X_s), U_s) d W_s - \sum_{k=0}^{N_t^h-1} \upsigma(X_{t_k}^h, \sL(X_{t_k}^h), U_{t_k}^h) \Delta W_k \nonumber\\
=& \int_{hN_t^h}^t b(X_s, \sL(X_s), U_s) ds + \int_{hN_t^h}^t \upsigma(X_s, \sL(X_s), U_s) dW_s  \nonumber\\
  &+\int_{0}^{hN_t^h} b(X_s, \sL(X_s), U_s) ds - \int_{0}^{hN_t^h} b(X_s, \sL(X_s), U^h(s)) ds \nonumber\\
  &+ \int_{0}^{hN_t^h} b(X_s, \sL(X_s), U^h(s)) ds - \int_{0}^{hN_t^h} b(X^h(s), \sL(X^h(s)), U^h(s)) ds \nonumber\\
  & +  \int_{0}^{hN_t^h} \upsigma(X_s, \sL(X_s)) d W_s - \int_{0}^{hN_t^h}\upsigma(X^h(s), \sL(X^h(s))) d W_s
\end{align*}
Thus, by the Cauchy-Schwarz inequality, we get
\begin{align*}
\sup_{t\leq T}\abs{X_t - X^h(t)}^2 =& 4\Big(\sup_{t\leq T}\abs{\int_{hN_t^h}^t b(X_s, \sL(X_s), U_s) ds + \int_{hN_t^h}^t \upsigma(X_s, \sL(X_s), U_s) dW_s}^2  \nonumber\\
  &+ \sup_{t\leq T}\abs{\int_{0}^{hN_t^h} b(X_s, \sL(X_s), U_s) ds - \int_{0}^{hN_t^h} b(X_s, \sL(X_s), U^h(s)) ds}^2 \nonumber\\
  &+ \sup_{t\leq T} \abs{\int_{0}^{hN_t^h} b(X_s, \sL(X_s), U^h(s)) ds - \int_{0}^{hN_t^h} b(X^h(s), \sL(X^h(s)), U^h(s)) ds}^2 \nonumber\\
  & +  \sup_{t\leq T}\abs{\int_{0}^{hN_t^h} \upsigma(X_s, \sL(X_s)) d W_s - \int_{0}^{hN_t^h}\upsigma(X^h(s), \sL(X^h(s))) d W_s}^2\Big)
\nonumber\\
\leq & 4\Big(\sup_{t\leq T}\abs{\int_{hN_t^h}^t b(X_s, \sL(X_s), U_s) ds + \int_{hN_t^h}^t \upsigma(X_s, \sL(X_s), U_s) dW_s}^2  \nonumber\\
  & + \sup_{t\leq T}\abs{\int_{0}^{hN_t^h} b(X_s, \sL(X_s), U_s) ds - \int_{0}^{hN_t^h} b(X_s, \sL(X_s), U^h(s)) ds}^2 \nonumber\\
  &+ T\int_{0}^{T} \abs{b(X_s, \sL(X_s), U^h(s)) ds - b(X^h(s), \sL(X^h(s)), U^h(s))}^2 ds \nonumber\\
  & +  \sup_{t\leq T}\abs{\int_{0}^{hN_t^h} \left(\upsigma(X_s, \sL(X_s)) - \upsigma(X^h(s), \sL(X^h(s)))\right) d W_s}^2\Big)
\end{align*}
Using the fact that $b$, $\upsigma$ are bounded and the Burkholder-Davis-Gundy (BDG) inequality
\begin{align}\label{EThconv1A}
&\mathbb{E}[\sup_{t\leq T}\abs{X_t - X^h(t)}^2] \nonumber \\
& \leq 4\Big(\hat{C}_1 h + \eta_{h} + T\mathbb{E}[\int_{0}^{T} \abs{b(X_s, \sL(X_s), U^h(s)) ds - b(X^h(s), \sL(X^h(s)), U^h(s))}^2 ds] \nonumber\\
  & +  \hat{C}_2\mathbb{E}[\int_{0}^{T} \abs{\left(\upsigma(X_s, \sL(X_s)) - \upsigma(X^h(s), \sL(X^h(s)))\right)}^2 d s \Big)\nonumber\\
&\leq 4\Big(\hat{C}_1 h + \eta_{h} + 2(T + \hat{C}_2)\mathbb{E}[\int_{0}^{T} \abs{X_s -X^h(s)}^2 + W_1(\sL(X_s), \sL(X^h(s)))^2 ds]\Big),  
\end{align} where $\hat{C}_1 > 0$ is a constant which depends only on the bounds of $b, \upsigma$,\, the constant $\hat{C}_2 >0$ is obtained from the BDG inequality and
\begin{align*}
\eta_{h} = \mathbb{E}[\sup_{t\leq T}\abs{\int_{0}^{hN_t^h} b(X_s, \sL(X_s), U_s) ds - \int_{0}^{hN_t^h} b(X_s, \sL(X_s), U^h(s)) ds}^2]\,.
\end{align*}
Since $W_1(\sL(X_s), \sL(X^h(s)))^2 \leq W_2(\sL(X_s), \sL(X^h(s)))^2 \leq \mathbb{E}[\abs{X_s - X^h(s)}^2]$, we obtain
\begin{align*}
\mathbb{E}[\sup_{t\leq T}\abs{X_t - X^h(t)}^2] &\leq 4\Big(\hat{C}_1 h + \eta_{h} + 4(T + \hat{C}_2)\int_{0}^{T}\mathbb{E}[\sup_{s\leq t}\abs{X_s - X^h(s)}^2]\Big).
\end{align*}

Now applying the Gronwall's inequality, it follows that
\begin{equation}\label{EThconv1B}
\mathbb{E}[\sup_{t\leq T}\abs{X_t - X^h(t)}^2] \leq 4(\hat{C}_1 h + \eta_{h})e^{16T(T + \hat{C}_2)}
\end{equation}
Since $U^h(\cdot) \to U$ weakly a.s. in $\Omega$, by dominated convergence theorem (as in (\ref{EThExistenceC1A})) we get $\eta_h \to 0$ as $h\downarrow 0$\,. Thus, by letting $h\downarrow 0$ from (\ref{EThconv1B}), we deduce that (\ref{EConv1A}) holds. This completes the proof of the theorem\,.
\end{proof}
In the above theorem, if we want to approximate the Mckean-Vlasov system under a fixed piece-wise constant non-anticipative policy, then $\eta_{h}$ becomes zero and we get the following rate of convergence.
\begin{corollary}\label{cor:EMAppro1AMV}
Suppose Assumptions \ref{Alipschitz} and \ref{Abounded} hold. Let $X_t$ be the solution of the controlled Mckean-Vlasov system (\ref{E1.1}) under a piece-wise constant non-anticipative policy $m^{h}$ and $X_{k}^{h}$ (with continuous time interpolated process $X^{h}(\cdot)$) be the solution of the Euler-Maruyama scheme (\ref{Eeuler_maruyama}) under the same piece-wise constant policy. Then, for some positive constant $\bar{C}_{EM}$ (independent of $m^{h}$) we have
\begin{equation}\label{ENConvCor1AMV}
\mathbb{E}[\sup_{t \in [0, T]}|X_t - X^{h}(t)|^2] \leq \bar{C}_{EM}h\,.
\end{equation}    
\end{corollary}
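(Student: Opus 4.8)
The plan is to obtain this corollary as an immediate specialization of Theorem~\ref{Thconvergence}. Its proof already yields the quantitative bound (\ref{EThconv1B}),
\[
\mathbb{E}[\sup_{t\leq T}|X_t - X^h(t)|^2] \leq 4(\hat{C}_1 h + T\eta_h)e^{16T(T+\hat{C}_2)},
\]
valid for any admissible $U$ driving (\ref{E1.1}) and any piecewise-constant $U^h$ driving the Euler--Maruyama scheme. First I would note that here the \emph{same} piecewise-constant policy $m^h$ drives both systems, so that, in the notation of that theorem, $U = U^h = m^h$; since $m^h$ is constant on each grid cell $[t_k, t_{k+1})$, its interpolation satisfies $U^h(s) = U_s = m^h_s$ for all $s \in [0,T]$.

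Next I would observe that the policy-mismatch term
\[
\eta_h = \mathbb{E}\Big[\int_0^T |b(X_s, \sL(X_s), U_s) - b(X_s, \sL(X_s), U^h(s))|^2\, ds\Big]
\]
then vanishes identically, rather than merely tending to zero as in the theorem. Substituting $\eta_h = 0$ into (\ref{EThconv1B}) would leave only the endpoint contribution and give
\[
\mathbb{E}[\sup_{t\leq T}|X_t - X^h(t)|^2] \leq 4\hat{C}_1 e^{16T(T+\hat{C}_2)}\, h,
\]
so the claim holds with $\bar{C}_{EM} := 4\hat{C}_1 e^{16T(T+\hat{C}_2)}$.

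Finally I would check that $\bar{C}_{EM}$ is uniform over all piecewise-constant policies: the constant $\hat{C}_1$ depends only on the uniform bound on $b,\upsigma$ (Assumption~\ref{Abounded}), $\hat{C}_2$ is the BDG constant, and the Gronwall exponent depends only on $T$ and the Lipschitz constant $C_1$ (Assumption~\ref{Alipschitz}), none of which reference the particular choice of $m^h$. The only genuinely substantive point — already handled inside the proof of Theorem~\ref{Thconvergence}, and the true source of the $O(h)$ rate — is the $O(h)$ control of the endpoint term $\hat{C}_1 h$, which arises from the final partial interval $[hN_t^h, t]$ of length at most $h$ via boundedness of $b,\upsigma$ for the drift part and the BDG maximal inequality for the stochastic-integral part. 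Since all the Lipschitz terms are absorbed by the Gronwall step and $\eta_h = 0$, the entire residual error reduces to this endpoint term, which is of order $h$, yielding the asserted strong order $1/2$ (in $L^2$) rate.
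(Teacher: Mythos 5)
Your proposal is correct and is exactly the paper's own argument: the corollary is stated immediately after the remark that for a fixed piecewise-constant policy (aligned with the grid) the mismatch term $\eta_h$ in the proof of Theorem~\ref{Thconvergence} vanishes, so the Gronwall bound (\ref{EThconv1B}) reduces to $4\hat{C}_1 e^{16T(T+\hat{C}_2)}h$. Your additional check that the constant is uniform over piecewise-constant policies matches the paper's parenthetical claim that $\bar{C}_{EM}$ is independent of $m^h$.
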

\begin{remark}\label{rem:MV_EM_vs_sampled} Let \(T>0\) be fixed. For any given piece-wise constant policy \(\tilde{U}^h(t)= \sum_{k}U^h_k I_{[kh,(k+1)h)} (t)\), define the discrete-time process \emph{sampled} at grid points by
\begin{align}\label{DTApprSimul}
X_{(k+1)h} \;=\; X_{kh} + \int_{kh}^{(k+1)h} b\bigl(X_s,\sL(X_s),U^h_k\bigr)ds
+ \int_{kh}^{(k+1)h}\upsigma\bigl(X_s,\sL(X_s)\bigr)\,dW_s,
\end{align}
with continuous-time interpolated process $\tilde{X}^h$. The Euler--Maruyama (EM) iterates (with the same piecewise-constant control) is given by (\ref{Eeuler_maruyama}) with the continuous-time interpolated process $X^h(t)$. Then, from Theorem \ref{Thconvergence}, we have
\[
\bE\big[\,\sup_{0\leq t \leq T}|\tilde{X}^{h}(t) - X^h(t)|^2\,\big] \le \bE\big[\,\sup_{0\leq t \leq T}|X^{h}_t - X^h(t)|^2\,\big]\leq \hat{C}\,h.
\]
where $X^{h}$ is the solution of the (\ref{E1.1}) under the piece-wise constant policy $\tilde{U}^h$\,. Therefore, one can use either of the discrete-time Mckean-Vlasov systems to construct a piece-wise constant near-optimal policy for continuous time Mckean-Vlasov systems: The model above (\ref{DTApprSimul}) is more suitable for online or simulation based methods as in Q-learning implementations and the approximation via (\ref{Eeuler_maruyama}) is more suitable for off-line numerical methods. 
\end{remark}

\subsubsection{Convergence for the finite horizon cost criterion}
\begin{theorem}\label{Tconvevaluefunc}
Suppose Assumptions \ref{Alipschitz}, \ref{Abounded}, and \ref{Acost} hold. Let $V^h(x)$ be the value function in the discrete-time model corresponding to the piece-wise constant policy (e.g., $m^{*\epsilon}$, see Lemma~\ref{LNearOptiA}). Then, we have 
\begin{equation*}
\lim_{h\downarrow 0}\abs{V_T^h(x) - V_T(x)} =  0 \quad\text{for all}\,\,\, x\in \Rd\,.
\end{equation*}
\end{theorem}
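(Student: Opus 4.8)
The plan is to prove the two-sided estimate $V_T(x)-\hat{C}_5\sqrt{h}\le V_T^h(x)\le V_T(x)+\hat{C}_5\sqrt{h}$ by transferring a (near-)optimal policy from each model into the other and bounding the resulting cost gap through the $L^2$ state-approximation rate. The decisive structural fact I would exploit is that $\bar{C}_{EM}$ in Corollary~\ref{cor:EMAppro1AMV} is \emph{independent of the piece-wise constant policy}; this uniformity is exactly what upgrades the qualitative convergence of Theorem~\ref{Thconvergence} into a clean $\sqrt{h}$ rate and lets the near-optimality slack be sent to zero without degrading the constant.

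\emph{Upper bound on $V_T^h$.} First I would fix $\epsilon>0$ and invoke Lemma~\ref{LNearOptiA} to produce a piece-wise constant policy $m^{*\epsilon}$ with $\cJ_T^{m^{*\epsilon}}(x)\le V_T(x)+\epsilon$, which also serves as an admissible discrete-time policy in $\Uadm^h$ via its $h$-grid sampling. Running the Euler--Maruyama scheme \eqref{Eeuler_maruyama} under this policy, the control-mismatch term $\eta_h$ in Theorem~\ref{Thconvergence} is supported on the finitely many $h$-cells straddling the switch times of $m^{*\epsilon}$, a set of measure $O(h)$, so $\eta_h=O(h)$ by boundedness of $b$; hence $\mathbb{E}[\sup_{t\le T}|X_t-X^h(t)|^2]=O(h)$ (and it is $\le\bar{C}_{EM}h$ outright when the switch times already lie on the grid, by Corollary~\ref{cor:EMAppro1AMV}). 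Because the interpolated state $X^h(\cdot)$ and control $U^h(\cdot)$ are constant on each cell, the discrete running-cost sum equals $\mathbb{E}[\int_0^{hN_T^h}c(X^h(t),\sL(X^h(t)),U^h(t))\,dt]$ exactly, so the only discrepancies between $\cJ_{T,h}^{m^{*\epsilon}}(x)$ and $\cJ_T^{m^{*\epsilon}}(x)$ are the Lipschitz cost gap driven by $|X_t-X^h(t)|$ and $W_1(\sL(X_t),\sL(X^h(t)))\le(\mathbb{E}|X_t-X^h(t)|^2)^{1/2}$, estimated exactly as in \eqref{EcostConver1A}, plus the terminal-cell remainder on $[hN_T^h,T]$, which is $O(h)$ by Assumption~\ref{Acost}. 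Combining gives $|\cJ_{T,h}^{m^{*\epsilon}}(x)-\cJ_T^{m^{*\epsilon}}(x)|\le C\sqrt{h}$ with $C$ independent of $\epsilon$, whence $V_T^h(x)\le\cJ_{T,h}^{m^{*\epsilon}}(x)\le V_T(x)+\epsilon+C\sqrt{h}$; letting $\epsilon\downarrow0$ yields $V_T^h(x)\le V_T(x)+C\sqrt{h}$.

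\emph{Lower bound on $V_T^h$.} For the reverse inequality I would start from the discrete side: Theorem~\ref{TOptiDiscMcKV1A} supplies an optimal Markov discrete policy $U^{h,*}$ attaining $V_T^h(x)$. Its interpolation $U^{h,*}(\cdot)$ from \eqref{EInterPol} is, by construction, piece-wise constant on the $h$-grid and non-anticipative, hence admissible for the continuous problem, and Corollary~\ref{cor:EMAppro1AMV} applies directly with $\eta_h=0$, giving $\mathbb{E}[\sup_{t\le T}|X_t-X^h(t)|^2]\le\bar{C}_{EM}h$ between the continuous solution under $U^{h,*}(\cdot)$ and the optimal discrete chain. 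The same cost comparison then gives $|\cJ_T^{U^{h,*}(\cdot)}(x)-\cJ_{T,h}^{U^{h,*}}(x)|\le C\sqrt{h}$, so $V_T(x)\le\cJ_T^{U^{h,*}(\cdot)}(x)\le\cJ_{T,h}^{U^{h,*}}(x)+C\sqrt{h}=V_T^h(x)+C\sqrt{h}$. Taking $\hat{C}_5$ to be the larger of the two constants closes the estimate.

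The point requiring the most care is the cost-comparison step itself: justifying that matching a continuous-time integral cost against a discrete summed cost reduces cleanly to the already-proved $L^2$ state rate. The observations that dissolve this are that piece-wise constancy makes the Riemann sum an \emph{exact} integral of the interpolated running cost, that the mean-field coupling enters only through $W_1(\sL(X_t),\sL(X^h(t)))\le(\mathbb{E}|X_t-X^h(t)|^2)^{1/2}$, and that the policy-uniformity of $\bar{C}_{EM}$ permits $\epsilon\downarrow0$ without harming the rate; the grid-alignment of $m^{*\epsilon}$ is a genuinely minor technicality, since misalignment contributes only an extra $O(h)$ to $\eta_h$ and hence does not affect the $O(\sqrt{h})$ conclusion.
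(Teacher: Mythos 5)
Your proposal is correct and follows essentially the same route as the paper's proof: both bound $V_T^h$ from above via the near-optimal piece-wise constant policy $m^{*\epsilon}$ of Lemma~\ref{LNearOptiA} together with the policy-uniform Euler--Maruyama rate of Corollary~\ref{cor:EMAppro1AMV} and the Lipschitz cost comparison (with the $O(h)$ terminal-cell remainder), and from below by interpolating the optimal discrete policy $U^{h,*}$ into the continuous-time problem, then send $\epsilon\downarrow 0$. Your extra remark on grid misalignment of the switch times of $m^{*\epsilon}$ is a minor refinement the paper passes over silently, but it does not change the argument.
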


\begin{proof}
For any piece-wise constant policy $U^h\in \Uadm^h$, let $\{X_k^h\}$ be the discrete-time controlled process obtained from (\ref{Eeuler_maruyama}). Recall that $N_t^h = \sup \{ n : nh \leq t \}$. Then, we have
\begin{align*}
& \abs{\mathbb{E}_x^{U^h} \left[ \sum_{k=0}^{ N_T^h -1 } hc(X_k^h, \sL(X_k^h), U_k^h) + c_T(X^h_{N_T^h}, \sL(X_{N_T^h}^h))\right] \nonumber\\
&    - \mathbb{E}_x^{U^h} \left[ \int_{0}^{T} c(X^h(s), \sL(X^h(s)) U^h(s)) ds + c_T(X^h(T), \sL(X^h(T)))\right]}\nonumber\\ 
\leq & \abs{\mathbb{E}_x^{U^h} \left[\sum_{k=0}^{ N_T^h - 1 } \int_{kh}^{(k+1)h} \left(c(X_k^h, \sL(X_k^h), U_k^h) - c(X^h(s), \sL(X^h(s)) U^h(s))\right) ds \right]} \nonumber\\
&+ \abs{\mathbb{E}_x^{U^h} \left[ \int_{hN_T^h}^{T} c(X^h(s),\sL(X^h(s)), U^h(s)) ds \right]}\nonumber\\
=& \abs{\mathbb{E}_x^{U^h} \left[ \int_{hN_T^h}^{T} c(X^h(s),\sL(X^h(s)), U^h(s)) ds \right]}\nonumber\\
\leq & \|c\|_{\infty}h
\end{align*}

Thus, it follows that
\begin{equation*}
\mathcal{J}_{T,h}^{U^h}(x) = \mathbb{E}_x^{U^h} \left[ \int_{0}^{T}  c(X^h(s), U^h(s)) + c_T(X^h(T))\right] + \order(h).
\end{equation*}
Since $m^{*\epsilon}$ is a piece-wise constant $\epsilon$-optimal control for the continuous time problem (existence is guaranteed by Lemma~\ref{LNearOptiA}), we have 
\begin{align}\label{bound22}
\cJ_T^{m^{*\epsilon}}(x) \leq V_T(x) + \epsilon.
\end{align}
 Consider the discrete-time model $X^h$ (here $h$ depends on $\epsilon$) associated with the piecewise constant control $m^{*\epsilon}$ (as in (\ref{Eeuler_maruyama})); with optimal value $V_T^h(x)$. Then, it follows that 
 \begin{align}\label{bound23}
 V_T^h(x) \leq \cJ_{T,h}^{m^{*\epsilon}}(x).
 \end{align}
Since $c, c_T$ are Lipschitz continuous, as we have derived the estimate \eqref{EcostConver1A}, for some constant $C_5 > 0$ we get
\begin{align*}
\abs{\cJ_T^{m^{*\epsilon}}(x) - \cJ_{T,h}^{m^{*\epsilon}}(x)} \leq & \abs{\mathbb{E}_{x} \left[ \int_0^T c(X_t, \sL(X_t), m^{*\epsilon}_t) dt + c_T(X_T, \sL(X_T)) \right] \nonumber\\
&- \mathbb{E}_{x} \left[ \int_0^T c(X^h(t), \sL(X^h(t)), m^{*\epsilon}_t) dt + c_T(X^h(T), \sL(X^h(T))) \right]} + \abs{\order(h)}\nonumber\\
\leq & C_5\sqrt{\mathbb{E}_x[\sup_{t\leq T}\abs{X_t - X^h(t)}^2]} + \|c\|_{\infty} h
\end{align*}

From Corollary~\ref{cor:EMAppro1AMV}, we know that 
$\mathbb{E}[\sup_{t\leq T}\abs{X_t - X^h(t)}^2] \leq \bar{C}_{EM} h\,.$ This implies that
\begin{align}\label{ETconvevaluefunc1A}
\abs{\cJ_T^{m^{*\epsilon}}(x) - \cJ_{T,h}^{m^{*\epsilon}}(x)} \leq (C_5\sqrt{\bar{C}_{EM}} + \|c\|_{\infty})\sqrt{h} = \hat{C}_5\sqrt{h}, 
\end{align}
where the constant $\hat{C}_5 := (C_5\sqrt{\bar{C}_{EM}} + \|c\|_{\infty})$ is independent of the piece-wise constant policy $m^{*\epsilon}$\,.
Thus, from (\ref{bound22}, \ref{bound23}) and (\ref{ETconvevaluefunc1A}) we get
\begin{equation}\label{EconvevaluefuncA}
V_T^h(x) \leq \cJ_{T,h}^{m^{*\epsilon}}(x) \leq \cJ_{T}^{m^{*\epsilon}}(x) + \hat{C}_5\sqrt{h} \leq V_T(x) + \epsilon +\hat{C}_5\sqrt{h}\,.
\end{equation}

For the lower bound, let $U^{h,*}\in \Uadm^h$ be an optimal control of the discretized system\,. Viewing it as an element of $\cM(T)$, in view of (\ref{ETconvevaluefunc1A}), we have
\begin{equation}\label{EconvevaluefuncB}
V_T^h(x) = \cJ_{T,h}^{U^{h, *}}(x) \geq \cJ_{T}^{U^{h,*}}(x) - \hat{C}_5\sqrt{h} \geq  V_T(x) - \hat{C}_5\sqrt{h}\,.
\end{equation} 

Since $\epsilon$ is arbitrary, from (\ref{EconvevaluefuncA}) and (\ref{EconvevaluefuncB}) we obtain the desired result\,. This completes the proof\,.
\end{proof}
\begin{corollary}\label{cor:CNearFinite}
From the proof of the above theorem (as we have obtained the estimate (\ref{ETconvevaluefunc1A})), it follows that for piecewise-constant policies (aligned with the Euler-Maruyama grid) there is a uniform error estimate
\[
\sup_{m^{h}\ \text{piecewise-const on grid }h}\big|\cJ_T^{m^{h}}(x) - \cJ_{T,h}^{m^{h}}(x)\big| \le \tilde{C}\,h^{\frac{1}{2}},
\]
for some constant $\tilde{C}$ that depends only on the model parameters and $T$.
\end{corollary}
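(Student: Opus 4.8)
The plan is to observe that Corollary~\ref{cor:CNearFinite} is, in essence, a \emph{uniformity} statement extracted from the proof of Theorem~\ref{Tconvevaluefunc}: the bound \eqref{ETconvevaluefunc1A} was derived for the specific $\epsilon$-optimal policy $m^{*\epsilon}$, but a careful inspection shows that nothing in its derivation exploited any property of $m^{*\epsilon}$ beyond its being piecewise-constant on the grid of meshsize $h$. Accordingly, I would first re-read the chain of inequalities leading to \eqref{ETconvevaluefunc1A} and isolate exactly three inputs: (a) the Lipschitz continuity of $c$ and $c_T$ from Assumption~\ref{Acost}, which produces the term $C_5\sqrt{\mathbb{E}_x[\sup_{t\le T}|X_t-X^h(t)|^2]}$; (b) the uniform strong Euler--Maruyama error estimate of Corollary~\ref{cor:EMAppro1AMV}, namely $\mathbb{E}[\sup_{t\le T}|X_t-X^h(t)|^2]\le \bar C_{EM}\,h$; and (c) the boundedness of $c$, used to control the leftover boundary interval $[hN_T^h,T]$ by $\|c\|_\infty h=\order(h)$. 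The point is that the Lipschitz constant $C_5$, the EM constant $\bar C_{EM}$, and $\|c\|_\infty$ are all intrinsic to the model and to $T$, and are blind to the particular control.

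Next I would make the argument explicit. Fix an arbitrary piecewise-constant policy $m^h$ aligned with the grid, let $X$ be the solution of \eqref{E1.1} under $m^h$ and let $X^h$ be the interpolated Euler--Maruyama iterate \eqref{Eeuler_maruyama} driven by the \emph{same} $m^h$. Repeating verbatim the cost-difference estimate \eqref{EcostConver1A} applied to the pair $(X,X^h)$ (rather than to two continuous-time solutions), together with the boundary bound $|\mathbb{E}_x^{m^h}[\int_{hN_T^h}^T c\,\D s]|\le\|c\|_\infty h$, yields
\[
\big|\cJ_T^{m^{h}}(x) - \cJ_{T,h}^{m^{h}}(x)\big|
\le C_5\sqrt{\mathbb{E}_x\big[\sup_{t\le T}|X_t - X^h(t)|^2\big]} + \|c\|_\infty\,h.
\]
Substituting the EM bound of Corollary~\ref{cor:EMAppro1AMV} gives the right-hand side $\le (C_5\sqrt{\bar C_{EM}}+\|c\|_\infty\sqrt{h})\sqrt{h}\le \tilde C\,h^{1/2}$ for $h\le 1$, with $\tilde C:=C_5\sqrt{\bar C_{EM}}+\|c\|_\infty$ depending only on the Lipschitz/boundedness constants of the data and on $T$. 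Since the right-hand side is independent of $m^h$, taking the supremum over all grid-aligned piecewise-constant policies preserves the inequality and delivers the claimed uniform estimate.

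The one substantive point to verify—and the only place where the argument could fail if one is not careful—is that $\bar C_{EM}$ is genuinely policy-independent. This I would justify by tracing back to Corollary~\ref{cor:EMAppro1AMV} and its parent Theorem~\ref{Thconvergence}: there the error splits into the control-mismatch term $\eta_h=\mathbb{E}[\int_0^T|b(X_s,\sL(X_s),U_s)-b(X_s,\sL(X_s),U^h(s))|^2\,\D s]$ and the remaining Gronwall terms. For a \emph{fixed} piecewise-constant control used in both the continuous SDE \eqref{E1.1} and the EM scheme \eqref{Eeuler_maruyama}, the driving control is identical, so $\eta_h=0$ and the surviving constant in the Gronwall estimate \eqref{EThconv1B} is $4\hat C_1 e^{16T(T+\hat C_2)}$, which depends only on the bounds of $b,\upsigma$ (through $\hat C_1,\hat C_2$) and on $T$. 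Thus $\bar C_{EM}$ carries no dependence on the chosen policy, and $\tilde C$ is uniform as asserted. I do not anticipate any genuine obstacle here; the entire content is bookkeeping of constants, and the corollary follows immediately once the cancellation of $\eta_h$ is recorded.
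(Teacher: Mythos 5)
Your proposal is correct and follows essentially the same route as the paper: the corollary is exactly the observation that the chain of estimates leading to \eqref{ETconvevaluefunc1A} (cost-difference bound via Lipschitz continuity of $c,c_T$, the boundary term $\|c\|_\infty h$, and the Euler--Maruyama strong error of Corollary~\ref{cor:EMAppro1AMV} with $\eta_h=0$ for a fixed grid-aligned policy) uses no property of the specific policy, so the resulting constant is policy-independent. Your explicit tracing of the Gronwall constant $4\hat C_1 e^{16T(T+\hat C_2)}$ to confirm that $\bar C_{EM}$ is policy-independent is precisely the point the paper asserts, and your bookkeeping (writing $C_5\sqrt{\bar C_{EM}}$ rather than the paper's $C_5\bar C_{EM}$) is in fact slightly more careful than the paper's own statement of the constant.
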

Next, using the above continuity result of the value functions with respect to the discrete-time approximation, we establish the near optimality of the discrete-time optimal policy in continuous-time Mckean-Vlasov system.
\begin{remark}
In the classical (non–mean-field) controlled diffusion setting, a substantial body of work establishes \emph{explicit convergence rates} for discrete-time or finite-difference approximations by exploiting the regularity of the solutions of the associated Bellman equations.  In particular, Krylov \cite{KN98A,KN2000A} proved stability and derived error estimates for finite-difference schemes applied to degenerate Bellman equations, based on delicate mean-value theorems for stochastic integrals \cite{KN2001AA}. Moreover, for uniformly non-degenerate controlled diffusions, Krylov showed in \cite{KN99AA} that policies which are constant on intervals of length $h^2$ approximate the continuous-time value function with error of order $h^{1/3}$.  These rates were subsequently improved by Barles, Jakobsen, and co-authors \cite{BR-02,BJ-06} using refined viscosity-solution techniques.  In contrast, for McKean--Vlasov control problems the Bellman or master equations generally lack the regularity required for such PDE-based estimates.  Consequently, our approach relies instead on \emph{probabilistic arguments} and compactness of controlled laws: these yield robust convergence of discrete-time value functions to their continuous-time counterparts, with an explicit rate of convergence under some mild structural assumptions over the piece-wise constant control policy space (see, Corollary~\ref{cor:EMAppro1AMV} and Corollary~\ref{cor:CNearFinite}).
\end{remark}
\begin{theorem}\label{TNearFinite}
Suppose Assumptions \ref{Alipschitz}, \ref{Abounded}, and \ref{Acost} hold. Let $U^{*,h}\in \Um^h$ (with continuous time interpolation $\tilde{U}^{*,h}$), be an optimal policy of the discrete-time approximating model (\ref{Eeuler_maruyama}) (obtained in Theorem~\ref{TOptiDiscMcKV1A}). Then, we have
\begin{equation*}
\lim_{h\downarrow 0}\abs{\cJ_{T}^{\tilde{U}^{*,h}}(x) - V_T(x)} = 0  \quad\text{for all}\,\,\, x\in \Rd\,.
\end{equation*}
\end{theorem}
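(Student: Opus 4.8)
The plan is to obtain the two-sided bound by chaining three facts already in hand: the optimality of $U^{*,h}$ in the discrete-time model, the uniform continuous-versus-discrete cost estimate of Corollary~\ref{cor:CNearFinite}, and the value-function convergence of Theorem~\ref{Tconvevaluefunc}. The only genuinely new observation needed is that the interpolated policy $\tilde{U}^{*,h}$ is itself an admissible control for the continuous-time problem, which immediately supplies one side of the inequality.

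First I would record that, since $U^{*,h}\in\Um^h$ is optimal for the discrete-time problem (Theorem~\ref{TOptiDiscMcKV1A}), we have $\cJ_{T,h}^{U^{*,h}}(x)=V_T^h(x)$. Next, viewing the realized piecewise-constant control sequence $\{U_k^{*,h}\}$ interpolated as in (\ref{EInterPol}) as a non-anticipative element of $\cM(T)$ — non-anticipativity being inherited from the Euler--Maruyama recursion and piecewise-constancy being preserved — I would invoke Corollary~\ref{cor:CNearFinite} (equivalently the uniform estimate (\ref{ETconvevaluefunc1A})) to conclude
\[
\big|\cJ_T^{\tilde{U}^{*,h}}(x) - \cJ_{T,h}^{U^{*,h}}(x)\big| \le \tilde{C}\sqrt{h}.
\]
For the upper side, combining this with the optimality identity and Theorem~\ref{Tconvevaluefunc} gives
\[
\cJ_T^{\tilde{U}^{*,h}}(x) \le \cJ_{T,h}^{U^{*,h}}(x) + \tilde{C}\sqrt{h} = V_T^h(x) + \tilde{C}\sqrt{h} \le V_T(x) + (\tilde{C}+\hat{C}_5)\sqrt{h}.
\]
For the lower side, since $\tilde{U}^{*,h}\in\Uadm$ is admissible for the continuous-time problem, the definition of the value function yields $\cJ_T^{\tilde{U}^{*,h}}(x)\ge V_T(x)$. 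Setting $\hat{C}_6:=\tilde{C}+\hat{C}_5$ then delivers $0\le \cJ_T^{\tilde{U}^{*,h}}(x)-V_T(x)\le \hat{C}_6\sqrt{h}$, which is the claim.

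The main obstacle is conceptual rather than computational: I must justify that the uniform cost estimate of Corollary~\ref{cor:CNearFinite} genuinely applies to the particular policy $\tilde{U}^{*,h}$. The delicate point is the McKean--Vlasov coupling, namely that in $\cJ_T^{\tilde{U}^{*,h}}$ the coefficients are evaluated at the continuous-time law $\sL(X_t)$, whereas in $\cJ_{T,h}^{U^{*,h}}$ they are evaluated at the discrete law $\sL(X_k^h)$. These are reconciled precisely because the strong $O(h)$ Euler--Maruyama estimate of Corollary~\ref{cor:EMAppro1AMV} is uniform over piecewise-constant policies aligned with the grid, so the supremum appearing in Corollary~\ref{cor:CNearFinite} already covers $\tilde{U}^{*,h}$; once this uniformity is granted, the constant $\hat{C}_6$ is independent of the specific optimal policy and the triangle-inequality chaining closes the argument. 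In effect the substantive work was done in Theorem~\ref{Tconvevaluefunc} and its corollaries, and this theorem is the operational restatement asserting that the interpolated discrete-time optimizer is itself $O(\sqrt{h})$-optimal for the original continuous-time McKean--Vlasov problem.
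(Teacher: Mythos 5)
Your proposal is correct and follows essentially the same route as the paper's proof: both chain the optimality identity $V_T^h(x)=\cJ_{T,h}^{U^{*,h}}(x)$, the uniform piecewise-constant cost estimate of Corollary~\ref{cor:CNearFinite}, and the value convergence of Theorem~\ref{Tconvevaluefunc} via the triangle inequality. Your additional observation that admissibility of $\tilde{U}^{*,h}$ gives the one-sided bound $\cJ_T^{\tilde{U}^{*,h}}(x)\ge V_T(x)$ is a minor (and valid) sharpening, but the substantive ingredients are identical to the paper's.
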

\begin{proof}
By triangle-inequality for each $x\in\Rd$, (since $V_T^{h}(x) = \cJ_{T,h}^{U^{*, h}}(x)$) we have
\begin{equation}
|\cJ_{T}^{\tilde{U}^{*, h}}(x) - V_T(x)| \leq |\cJ_{T}^{\tilde{U}^{*, h}}(x) - \cJ_{T,h}^{U^{*, h}}(x)| + |V_T^{h}(x) - V_T(x)|
\end{equation} From Theorem~\ref{Tconvevaluefunc} we know that $\lim_{h\downarrow 0}|V_T^{h}(x) - V_T(x)| = 0$\,. Also, arguing as in  the proof of the Theorem~\ref{Tconvevaluefunc} (in particular see Corollary~\ref{cor:CNearFinite}), we have $|\cJ_{T}^{\tilde{U}^{*, h}}(x) - \cJ_{T,h}^{U^{*, h}}(x)| \leq \tilde{C}\,\sqrt{h}$\,. This completes the proof. 
\end{proof}
\subsubsection{Convergence for the discounted cost criterion}
In the next theorem, we show that as the parameter of discretization approaches to zero, the optimal value of the discretized control problem converges to continuous time optimal value.
\begin{theorem}\label{TConvergenceConti}
Suppose that Assumptions \ref{Alipschitz}, \ref{Abounded}, and \ref{Acost} hold. Then, we have
\begin{equation}\label{ETConvergenceConti}
\lim_{h\downarrow 0}\abs{\cJ_{\alpha,h}^{*}(x, c) - V_{\alpha}(x)} = 0 \quad\text{for all}\,\,\, x\in \Rd\,.
\end{equation}
\end{theorem}
\begin{proof}
\textbf{Step-1:}
Since $\beta = e^{-\alpha h}$ for $U^h\in \Uadm^h$, with $v^h(\cdot)$ be the associated continuous-time interpolated policy (defined as in (\ref{EInterPol})), it follows that 
\begin{align*}
&\bigg|\Exp_x^{U^h} \left[\sum_{k=0}^{\infty} \beta^k c_h(X_k^h, U_k^h)\right] - \Exp_x^{U^{h}} \left[\int_{0}^{\infty} e^{-\alpha s} c(X^{h}(s), v^{h}(s)) \right] \bigg| \nonumber\\
& = \bigg|\Exp_x^{U^h} \left[\sum_{k=0}^{\infty} \int_{kh}^{(k+1)h}\left(\beta^k c(X_k^h, U_k^h) - e^{-\alpha s} c(X^{h}(s), v^{h}(s)) \right)ds\right]\bigg| \nonumber\\ 
&\leq \norm{c}_{\infty}\sum_{k=0}^{\infty} \int_{kh}^{(k+1)h} \abs{\left(e^{-\alpha hk} - e^{-\alpha s} \right)} ds \nonumber\\
& \leq \norm{c}_{\infty}h\left(1 - e^{-\alpha h} \right)\sum_{k=0}^{\infty} e^{-\alpha hk} = \norm{c}_{\infty}h\,.
\end{align*}
This implies that
\begin{equation}\label{EConvergenceContiA}
\cJ_{\alpha,h}^{U^{h}}(x) \,=\, \Exp_x^{U^{h}} \left[\int_{0}^{\infty} e^{-\alpha s} c(X^{h}(s), v^{h}(s)) \right] + \order(h)\,.
\end{equation}
\textbf{Step-2:}
Let $U^{h,*} \in \cM(\infty)$ be the continuous-time interpolated policy of an optimal policy for the discrete-time control problem (existence follows from a similar argument as in\cite{sanjarisaldiy2024optimality} \cite{bauerle2023mean}). Then arguing as in the proof of Theorem~\ref{Tconvevaluefunc}, and using (\ref{EConvergenceConti1A}) and (\ref{EConvergenceContiA}) for all $x\in \Rd$ and $\epsilon > 0$ it follows that 
\begin{align}\label{EConvergenceContiAA}
&\abs{\cJ_{\alpha}^{U^{h,*}}(x) - \cJ_{\alpha, h}^{U^{h,*}}(x)} \nonumber\\
&\leq \abs{\mathbb{E}_{x} \left[ \int_0^{T_{\epsilon}} e^{-\alpha t}\left(c(X_t^h, \sL(X_t^h),U^{h,*}(t)) -c(X^h(t), \sL(X^h(t)), U^{h,*}(t))\right) dt\right]} + \frac{\epsilon}{2}\nonumber\\   
&\leq  \hat{C}_5\sqrt{h} + \frac{\epsilon}{2}\,,
\end{align} where $X_t^h$ is the solution of the (\ref{E1.1}) under policy $U^{h,*}$ and $X^h(t)$ is the continuous-time interpolated process of the corresponding discrete-time approximating Mckean-Vlasov processes (obtained by Euler-Maruyama scheme (\ref{Eeuler_maruyama})). 

This implies
\begin{equation}\label{EConvergenceContiB}
V_{\alpha}^h = \cJ_{\alpha, h}^{U^{h,*}}(x) \geq \cJ_{\alpha}^{U^{h,*}}(x) - \hat{C}_5\sqrt{h} - \frac{\epsilon}{2}\geq V_{\alpha}(x) - \hat{C}_5\sqrt{h} - \frac{\epsilon}{2}\,.
\end{equation}

Again, for any $\epsilon > 0$ since $m^{*\epsilon}\in \cM(\infty)$ is a piece-wise constant $\frac{\epsilon}{2}$-optimal control of the continuous time problem, following the steps as in the proof of Theorem~\ref{Tconvevaluefunc}, we deduce that
\begin{equation}\label{EConvergenceContiC}
V_{\alpha}^h(x) \leq \cJ_{\alpha,h}^{m^{*\epsilon}}(x) \leq \cJ_{\alpha}^{m^{*\epsilon}}(x) + \hat{C}_5\sqrt{h} + \frac{\epsilon}{2}\leq V_{\alpha}(x) + \epsilon +\hat{C}_5\sqrt{h}\,.
\end{equation}
Since $\epsilon$ is abitrary, from (\ref{EConvergenceContiB}) and (\ref{EConvergenceContiC}), we get (\ref{ETConvergenceConti})\,. This completes the proof\,.
\end{proof}
The following theorem proves the near optimality of the discrete-time optimal policy in the continuous time model as the parameter of the discretization appraches to zero\,.
\begin{theorem}\label{TNeardiscounted}
Suppose that Assumptions \ref{Alipschitz}, \ref{Abounded}, and \ref{Acost} hold. Then, we have
\begin{equation}
\lim_{h\downarrow 0}\abs{\cJ_{\alpha}^{U^{h,*}}(x) - V_{\alpha}(x)} = 0 \quad\text{for all}\,\,\, x\in \Rd\,.
\end{equation}
\end{theorem}
\begin{proof} By triangle-inequality for each $x\in\Rd$, we have
\begin{equation}\label{Eneardiscounted1A}
|\cJ_{\alpha}^{U^{h,*}}(x) - V_{\alpha}(x)| \leq |\cJ_{\alpha}^{U^{h,*}}(x) - V_{\alpha}^{h}(x)| + |V_{\alpha}^{h}(x) - V_{\alpha}(x)|
\end{equation} From Theorem~\ref{TConvergenceConti} we have $\lim_{h\downarrow 0}|V_{\alpha}^{h}(x) - V_{\alpha}(x)| = 0$\,. Since $\cJ_{\alpha,h}^{U^{h,*}}(x) = V_{\alpha}^{h}(x)$ arguing as in the proof Theorem~\ref{TConvergenceConti} (see the estimate (\ref{EConvergenceContiAA})), we obtain $|\cJ_{\alpha}^{U^{h,*}}(x) - \cJ_{\alpha,h}^{U^{h,*}}(x)| \leq \hat{C}_5\sqrt{h} + \frac{\epsilon}{2}$\,. Since $\epsilon$ is arbitrary, from \eqref{Eneardiscounted1A} we obtain our result. 
\end{proof}

\section{Near-Optimality of the Discrete-Time Approximate McKean-Vlasov Solution for $N$-Particle Interacting Diffusions}\label{sec:Nparticle}

In the paper so far, we established near optimality of time-discretized McKean-Vlasov solution for the original diffusion. In this section, we show that if a collection of interacting agents adopt the policy obtained from the discrete-time approximation of the McKean-Vlasov problem, this policy becomes near optimal under centralized information among all policies (and due to the relaxed information requirements of the policy obtained from the McKean-Vlasov solution, under all information structures where each agent only has access to local state and mean-field information). This is a computationally very important result as it is well known that solving problems with many agents becomes computationally intractactable.  

As noted earlier, it has been established that under various conditions the solution of the $N$-agent interacting particle system with centralized information structure admits an optimal solution which converges to the McKean-Vlasov solution as $N \to \infty$, see \cite{budhiraja2012large,fornasier2014mean,fornasier2019mean,lacker2017limit} in continuous-time under complementary conditions and setups (including both deterministic and stochastic settings), and \cite{sanjarisaldiy2024optimality} in discrete-time. In this section, we bridge these findings in the context of our paper and develop asymptotic approximation results both in time and in the number of interacting agents. 

Consider a system of $N$ interacting particles (agents) evolving according to the controlled McKean–Vlasov dynamics.  Specifically, each particle $i=1,\dots,N$ has state $X^{N,i}_t$ satisfying the SDE
\begin{equation}\label{ENState}
dX^{N,i}_t \;=\; b\bigl(X^{N,i}_t,\mu^N_t,U^{N,i}_t\bigr)\,dt \;+\;\sigma\bigl(X^{N,i}_t,\mu^N_t\bigr)\,dW^i_t,\quad 
X^{N,i}_0 = x,
\end{equation} 
where $\{W^i\}$ are independent Brownian motions, $U^{N,i}_t$ is the control for particle $i$, and 
\[
\mu^N_t \;=\; \frac1N\sum_{j=1}^N \delta_{X^{N,j}_t}
\]
is the empirical distribution of the particle states.  

For the N-particle system, we define the space of admissible control policies by
\begin{align*}
&\cM^N(T) = \bigg\{{\bf m}^N = (m^{N,1}, m^{N,2}, \dots, m^{N,N})\mid m^{N,i}: (\Omega, \sF, \sP)\to \fM(T)\,\,\text{for each}\,i\,\,\text{and}\,\, \\
&{\bf m}^N_{[0, s]} \,\,\text{is independent of}\,\, {\bf W}_t - {\bf W}_s\,\, \text{for any} \,\, 0 \leq s\leq t \leq T \,\,\text{where}\,\,{\bf W} = (W^1, W^2, \dots, W^N)\bigg\}\,.
\end{align*}

Since the range space of each coordinate $\fM(T)$ is compact, any sequence in $\cM^N(T)$ is tight\,. Moreover, by standard chattering lemma, as in Lemma~\ref{LChatt}, we have piecewise-constant non-anticipative control policies are dense in $\cM^N(T)$\,.

We assume the same Lipschitz and boundedness conditions (Assumptions \ref{Alipschitz}, \ref{Abounded}, and \ref{Acost}) on $b,\sigma,c,c_T$ as in Section~\ref{Scontinuous}.  The finite-horizon cost for the $N$-particle system (average per agent) is naturally defined in analogy to (\ref{EFcost}) as 
\[
\cJ^N_T(x,U^N) \;=\; \frac{1}{N} \sum_{i=1}^N \mathbb{E}_{x}^{U^{N,i}}\Bigl[\int_0^T c\bigl(X^{N,i}_t,\mu^N_t,U^{N,i}_t\bigr)\,dt \;+\; c_T\bigl(X^{N,i}_T,\mu^N_T\bigr)\Bigr],
\] 
where ${\bf U}^N=(U^{N,1},\dots,U^{N,N})$ is the collection of controls.  (In particular, for a symmetric policy one has $\cJ^N_T(x,{\bf U}^N)= \mathbb{E}\bigl[\int_0^T c(X^{N,1}_t,\mu^N_t,U^{N,1}_t)\,dt+c_T(X^{N,1}_T,\mu^N_T)\bigr]$.)  The optimal value for the continuous-time $N$-agent problem is defined as 
\[
V_T^N(x) := \inf_{{\bf U}^N\in\cM^N (T)}\cJ^N_T(x,U^N)\,.
\]
Under further technical assumptions on the system model \cite{lacker2017limit}, it is known that as $N\to\infty$ this $N$-particle system converges (in law) to the McKean–Vlasov system of Section~2, and its optimal cost $V_T^N(x)$ approaches the McKean–Vlasov value $V_T(x)$. Accordingly, one can arrive at an approximation in both discrete-time and finite number of agents via first approximations in the number of agents \cite{budhiraja2012large,fornasier2019mean,lacker2017limit} and then discrete-time approximation for centralized diffusions paper as in  \cite{pradhanyuksel2023DTApprx}; or via the approach of our current paper to approximate with a discrete-time McKean-Vlasov model and then via \cite{sanjarisaldiy2024optimality} to relate the problem to one with a finite number of agents with dynamics as given in (\ref{ENState}) below. While these are consistent in leading to the near approximability in both time discretization and $N$-particle approximation, the technical conditions are complementary. For example we do not need the demanding convexity condition in \cite[Assumption C]{lacker2017limit} for the near optimality of Markov (in state and mean-field) policies; we also note that \cite[Assumption C]{lacker2017limit} is not needed if one is only to focus on the convergence of the joint path and control measures to those which are optimal \cite{{lacker2017limit}}).

The associated $\alpha$-discounted cost for the $N$-interacting particle system is given by
\[
\cJ^N_{\alpha}(x,{\bf U}^N) \;=\; \frac{1}{N} \sum_{i=1}^N \mathbb{E}_{x}^{U^{N,i}}\Bigl[\int_0^{\infty} e^{\alpha t} c\bigl(X^{N,i}_t,\mu^N_t,U^{N,i}_t\bigr)\,dt \Bigr],
\] and the optimal value is defined as
\[
V_{\alpha}^N(x) := \inf_{{\bf U}^N\in\cM^N (T)}\cJ^N_{\alpha}(x,{\bf U}^N)\,.
\]

We now introduce a discrete-time approximation for the $N$-particle system and show that the optimal discrete-time policy for the McKean–Vlasov problem remains near-optimal for the finite-$N$ system.  As in Section~\ref{SdiscreteSDE}, fix a time-step $h>0$ and let $t_k=kh$.  We use the following Euler–Maruyama scheme for each particle: for $i=1,\dots,N$,
\begin{equation}\label{ENStateDiscrete}
X^{N,h,i}_{k+1} \;=\; X^{N,h,i}_k \;+\; b\bigl(X^{N,h,i}_k,\mu^{N,h}_k,U^{N,i,h}_k\bigr)\,h \;+\; \sigma\bigl(X^{N,h,i}_k,\mu^{N,h}_k\bigr)\,\Delta W^i_k,\quad X^{N,i}_0=x,
\end{equation}
where $\Delta W^i_k = W^i_{t_{k+1}}-W^i_{t_k}$ and 
\[
\mu^N_k \;=\; \frac1N\sum_{j=1}^N \delta_{X^{N,j}_k}\,.
\]
For any $1\leq i \leq N$, let $X^{N,h,i}(\cdot)$ be the continuous-time interpolated process of the discrete-time process $\{X^{N,h,i}_{k}\}_k$\,. This is the natural $N$-particle extension of the single-agent discrete scheme (\ref{Eeuler_maruyama}), and $\mu^N_k$ is the empirical measure analogous to $\hat\mu^h_k$ in Remark~\ref{ER4.1}.  Under a given discrete-time admissible control policy ${\bf U}^{N,h}_k =(U^{N,1,h}_k,\dots,U^{N,N,h}_k)$, the $N$-agent discrete-time finite-horizon cost (average per agent) is defined by
\[
\cJ^N_{T,h}(x,{\bf U}^{N,h}) \;=\; \frac{1}{N}\sum_{i=1}^N\mathbb{E}_{x}^{U^{N,i,h}}\Bigl[\sum_{k=0}^{N_h^T-1} c\bigl(X^{N,i}_k,\mu^N_k,U^{N,i}_k\bigr)\,h \;+\; c_T\bigl(X^{N,i}_{N_h^T},\mu^N_{N_h^T}\bigr)\Bigr],
\]
where $N_h^T=\sup\{k:kh\le T\}$. Note that this is exactly the particle-averaged analogue of the single-agent cost $\cJ^{U^h}_{T,h}(x)$ given in (\ref{Ediscrete_cost}). As we will be importing the policy obtained from the McKean-Vlasov limit, we take the admissible discrete policy $\{U^N_k\}$ to be symmetric across agents (which will be shown further below to be near optimal under any centralized information structure). Let $V_T^{N,h}$ denote the optimal value of the discrete-time N-particle system. Also, for the associated continuous-time interpolated process $X^{N,i,h}(\cdot)$, the cost under any admissible policy is defined as follows: 
\begin{equation}\label{ENcostinterpo1A}
\bar{\cJ}^N_{T,h}(x,{\bf U}^{N,h}) \;=\; \frac{1}{N}\sum_{i=1}^N\mathbb{E}_{x}^{U^{N,i}}\Bigl[\int_{0}^{T} c\bigl(X^{N,i,h}(s),\mu^{N,h}_s,U^{N,i,h}(s)\bigr)\, ds \;+\; c_T\bigl(X^{N,i,h}(T),\mu^{N,h}_{T}\bigr)\Bigr]\,.
\end{equation}

Let $U^{h,*}$ denote an optimal discrete-time control for the McKean–Vlasov model as in (\ref{Ediscrete_optimal_control}).  We extend $U^{h,*}$ to the $N$-agent system by using the same (Markov for finite horizon or stationary Markov for discounted infinite horizon) policy for each particle, i.e.\ 
\begin{align}\label{SymmetricMcKeanSoln}
U^{N,i}_k = U^*_h\bigl(k,X^{N,i}_k,\mu^N_k\bigr),
\end{align}
for all $i$  (as obtained in Theorem~\ref{TOptiDiscMcKV1A}). We will show that this policy achieves an $N$-agent cost $\cJ^N_{T,h}(x,U^N)$ that is arbitrarily close to the optimal value of the $N$-particle system as $h\to0$ and $N$ large enough.  

Using Theorem \ref{TNearFinite} for finite horizons, and Theorem \ref{TNeardiscounted} for discounted infinite horizons, we have established near optimality of policies obtained from a time-discretized McKean-Vlasov dynamics. Building on \cite{sanjarisaldiy2024optimality}, we have then that the finite agent approximation (\ref{ENStateDiscrete}) is such that the optimal cost for this finite-agent approximation is close to the original McKean-Vlasov problem. 

In this section, we show that the discrete-time approximation for the McKean-Vlasov problem and its solution obtained via Theorem~\ref{TOptiDiscMcKV1A} is near optimal for the finite agent continuous-time problem for the dynamics given in (\ref{ENState}).

Now as in the Theorem~\ref{Thconvergence}, we have the following approximation for the N-particle system\,.
\begin{theorem} \label{ThconvergenceNparti}
Suppose Assumptions \ref{Alipschitz} and \ref{Abounded} hold. Let $X_t^N$ be the solution of the controlled $N$-particle system (\ref{ENState}) under ${\bf m}^N\in \cM^N(T)$ and $X_{k}^{N,i,h}$ (with continuous time interpolated process $X^{N,i,h}(\cdot)$) be the solution of the Euler-Maruyama scheme (\ref{ENStateDiscrete}) under a piece-wise constant non-anticipative policy ${\bf m}^{N,h}$ (which approximates ${\bf m}^{N}$ weakly a.s.). Then, we have
\begin{equation}\label{ENConv1A}
 \lim_{h\downarrow 0}\sup_{1\leq i \leq N}\mathbb{E}[\sup_{t \in [0, T]}|X_t^{N,i} - X^{N,i,h}(t)|^2] = 0\,.
\end{equation}
\end{theorem}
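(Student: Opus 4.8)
The plan is to replicate, particle by particle, the argument used in the proof of Theorem~\ref{Thconvergence}, the only genuinely new ingredient being the control of the coupling between the $N$ trajectories through the empirical measures. Fix $N$ and write, for each $i$, the difference $X^{N,i}_t - X^{N,i,h}(t)$ exactly as in that proof: a boundary remainder on the last subinterval $[hN_t^h,t]$, a control-approximation term comparing $b(X^{N,i}_s,\mu^N_s,U^{N,i}_s)$ with $b(X^{N,i}_s,\mu^N_s,U^{N,i,h}(s))$, a state-and-measure term comparing the integrands along $(X^{N,i},\mu^N)$ and $(X^{N,i,h},\mu^{N,h})$, and the analogous diffusion difference. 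Squaring, taking the supremum over $t\le T$, applying Cauchy--Schwarz to split into four pieces, and then using the boundedness of $b,\sigma$ together with the Burkholder--Davis--Gundy inequality (each particle being driven by its own $W^i$) yields, for a constant $C$ independent of $i$ and $N$,
\[
\phi_i(T)\ \le\ C\bigl(h+\eta^{N,i}_h\bigr)\ +\ C\int_0^T\Bigl(\mathbb{E}\bigl[\,|X^{N,i}_s-X^{N,i,h}(s)|^2\,\bigr]+\mathbb{E}\bigl[W_1(\mu^N_s,\mu^{N,h}_s)^2\bigr]\Bigr)\,ds,
\]
where $\phi_i(t):=\mathbb{E}[\sup_{s\le t}|X^{N,i}_s-X^{N,i,h}(s)|^2]$ and $\eta^{N,i}_h:=\mathbb{E}[\int_0^T|b(X^{N,i}_s,\mu^N_s,U^{N,i}_s)-b(X^{N,i}_s,\mu^N_s,U^{N,i,h}(s))|^2\,ds]$.

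The crucial step is to dominate the measure term. Because $\mu^N_s$ and $\mu^{N,h}_s$ are empirical measures carried by the same particle labels, the synchronous (identity) coupling gives the pathwise bound
\[
W_1(\mu^N_s,\mu^{N,h}_s)^2\ \le\ W_2(\mu^N_s,\mu^{N,h}_s)^2\ \le\ \frac1N\sum_{j=1}^N|X^{N,j}_s-X^{N,j,h}(s)|^2 .
\]
Taking expectations and bounding each integrand by its running supremum, the measure term is controlled by $\int_0^T \Psi(s)\,ds$, where $\Psi(t):=\max_{1\le i\le N}\phi_i(t)$; likewise the first term in the integral is at most $\int_0^T\phi_i(s)\,ds\le\int_0^T\Psi(s)\,ds$. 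Hence, taking the maximum over $i$ on both sides closes the recursion on the single scalar function $\Psi$:
\[
\Psi(T)\ \le\ C\Bigl(h+\max_{1\le i\le N}\eta^{N,i}_h\Bigr)\ +\ 2C\int_0^T\Psi(s)\,ds .
\]
Gronwall's inequality then gives $\Psi(T)\le C\bigl(h+\max_i\eta^{N,i}_h\bigr)e^{2CT}$.

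It remains to let $h\downarrow0$. Exactly as in Theorem~\ref{Thconvergence}, for each fixed $i$ the piecewise-constant relaxed control $U^{N,i,h}(\cdot)$ approximates $U^{N,i}$ weakly a.s.; since $b(X^{N,i}_s,\mu^N_s,\cdot)$ is bounded and continuous, the dominated convergence theorem yields $\eta^{N,i}_h\to0$. As $N$ is fixed, $\max_{1\le i\le N}\eta^{N,i}_h$ is a maximum over finitely many vanishing sequences and therefore also tends to $0$, whence $\Psi(T)=\sup_{1\le i\le N}\phi_i(T)\to0$, which is precisely (\ref{ENConv1A}). The main obstacle---and the only place the interaction truly enters---is the measure term: one must resist bounding $W_1(\mu^N_s,\mu^{N,h}_s)$ by a single particle's error and instead use the synchronous-coupling inequality above, which feeds the \emph{average} of all particle errors into the recursion and forces the coupled Gronwall argument to be run through $\Psi=\max_i\phi_i$ rather than through the individual $\phi_i$ in isolation.
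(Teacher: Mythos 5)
Your proposal is correct and follows essentially the same route as the paper's proof: the same particle-wise decomposition and BDG estimate as in Theorem~\ref{Thconvergence}, the same synchronous-coupling bound $W_1(\mu^N_s,\mu^{N,h}_s)^2\le W_2(\mu^N_s,\mu^{N,h}_s)^2\le \tfrac1N\sum_{j}|X^{N,j}_s-X^{N,j,h}(s)|^2$, the same Gronwall argument run on the maximum over particles, and the same dominated-convergence step to kill the control-approximation error. If anything, your explicit indexing of $\eta^{N,i}_h$ and the remark that the maximum over finitely many vanishing sequences vanishes is slightly more careful bookkeeping than the paper's (which suppresses the $i$-dependence in its $\bar{\eta}_h$), but the substance is identical.
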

\begin{proof} 
Using the fact that $b$, $\upsigma$ are bounded and the Burkholder-Davis-Gundy (BDG) inequality, as in the proof of Theorem~\ref{Thconvergence}, we have
\begin{align}\label{ENparticleThconv1A}
&\mathbb{E}[\sup_{t\leq T}\abs{X_t^{N,i} - X^{N,i,h}(t)}^2] \nonumber\\
\leq & 4\Big(\bar{C}_1 h + \bar{\eta}_{h} + T\mathbb{E}[\int_{0}^{T} \abs{b(X_t^{N,i}, \mu^{N}_s, m^{N,i,h}(s)) ds - b(X^{N,i,h}(s), \mu^{N,h}_s, m^{N,i,h}(s))}^2 ds] \nonumber\\
& +  \bar{C}_2\mathbb{E}[\int_{0}^{T} \abs{\left(\sigma(X^{N,i}_s, \mu^{N}_s) - \sigma(X^{N,i,h}(s), \mu^{N,h}_s)\right)}^2 d s \Big)\nonumber\\
\leq & 4\Big(\bar{C}_1 h + \bar{\eta}_{h} + 2(T + \bar{C}_2)\mathbb{E}[\int_{0}^{T} \abs{X_s^{N,i} - X^{N,i,h}(s)}^2 + W_1(\mu^{N}_s, \mu^{N,h}_s))^2 ds]\Big),  
\end{align} where $\bar{C}_1 > 0$ is a constant which depends only on the bounds of $b, \upsigma$,\, the constant $\bar{C}_2 >0$ is obtained from the BDG inequality and
\begin{align*}
\bar{\eta}_{h} = \max_{1\leq i\leq N}\mathbb{E}[\sup_{0\leq t \leq T}\abs{\int_{0}^{t}b(X_t^{N,i}, \mu^{N}_s, m^{N,i}_s) ds - \int_{0}^{t}b(X_t^{N,i}, \mu^{N}_s, m^{N,i,h}(s)) ds}^2]\,.
\end{align*}
Also, we have $W_1(\mu^{N}_s, \mu^{N,h}_s)^2 \leq W_2(\mu^{N}_s, \mu^{N,h}_s)^2 \leq \frac{1}{N}\sum_{j=1}^N |X^{N,j}_s - X^{N,j,h}_s|^2$. In view of this, we obtain
\begin{align*}
\max_{1\leq i \leq N}\mathbb{E}[\sup_{t\leq T}\abs{X_t^{N,i} &- X^{N,h,i}(t)}^2] \nonumber \\
&\leq 4\Big(\bar{C}_1 h + \bar{\eta}_{h} + 4(T + \bar{C}_2)\int_{0}^{T}\max_{1\leq i \leq N}\mathbb{E}[\sup_{s\leq t}\abs{X_s^{N,i} - X^{N,h,i}(s)}^2] ds \Big).
\end{align*}
Now applying the Gronwall's inequality, we deduce that
\begin{equation}\label{EThconvN1B}
\max_{1\leq i \leq N}\mathbb{E}[\sup_{t\leq T}\abs{X_t^{N,i} - X^{N,h,i}(t)}^2] \leq 4(\bar{C}_1 h + T\bar{\eta}_{h})e^{16T(T + \bar{C}_2)}
\end{equation}
Since ${\bf m}^{N,h}(\cdot) \to {\bf m}^{N}$ weakly (a.s. in $\Omega$), by dominated convergence theorem (as in (\ref{EThExistenceC1A})) we get $\bar{\eta}_h \to 0$ as $h\downarrow 0$\,. Thus, by letting $h\downarrow 0$ from (\ref{EThconvN1B}), we deduce that (\ref{ENConv1A}) holds. This completes the proof of the theorem\,.
\end{proof}
In view of the above theorem, under any fixed piecewise constant policy ${\bf m}^{N,h}$ (with step size $h$) we have the following strong uniform approximation bound. In the proof of the above theorem if we fix a piece-wise constant policy then $\bar{\eta}_{h} = 0$\,.
\begin{corollary}\label{cor:EMAppro1A}
Suppose Assumptions \ref{Alipschitz} and \ref{Abounded} hold. Let $X_t^{N,i}$ be the solution of the controlled $N$-particle system (\ref{ENState}) under a piece-wise constant non-anticipative policy ${\bf m}^{N,h}$ and $X_{k}^{N,i,h}$ (with continuous time interpolated process $X^{N,i,h}(\cdot)$) be the solution of the Euler-Maruyama scheme (\ref{ENStateDiscrete}) under the same piece-wise constant policy. Then, for some positive constant $C_{EM}$ (independent of $N$ and ${\bf m}^{N,h}$) we have
\begin{equation}\label{ENConvCor1A}
\sup_{1\leq i \leq N}\mathbb{E}[\sup_{t \in [0, T]}|X_t^{N,i} - X^{N,i,h}(t)|^2] \leq C_{EM}h\,.
\end{equation}    
\end{corollary}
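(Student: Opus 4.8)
The plan is to obtain the corollary as a direct specialization of Theorem~\ref{ThconvergenceNparti}, exploiting the fact that the only source of $h$-dependence in the Gronwall estimate \eqref{EThconvN1B}, beyond the explicit local discretization term $4\bar{C}_1 h$, is the control-mismatch quantity $\bar{\eta}_h$. First I would observe that $\bar{\eta}_h$ measures precisely the discrepancy between the control $m^{N,i}_s$ driving the continuous-time system \eqref{ENState} and the approximating discrete control $m^{N,i,h}(s)$ driving the Euler--Maruyama scheme \eqref{ENStateDiscrete}. In the present setting both systems are run under \emph{one and the same} fixed piecewise-constant non-anticipative policy ${\bf m}^{N,h}$, so $m^{N,i}_s = m^{N,i,h}(s)$ for every $s$ and every $i$; consequently the integrand $b(X^{N,i}_s,\mu^N_s,m^{N,i}_s)-b(X^{N,i}_s,\mu^N_s,m^{N,i,h}(s))$ vanishes identically and $\bar{\eta}_h \equiv 0$.

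Substituting $\bar{\eta}_h = 0$ into \eqref{EThconvN1B} then yields
\[
\sup_{1\le i\le N}\mathbb{E}\Big[\sup_{t\le T}\big|X^{N,i}_t - X^{N,i,h}(t)\big|^2\Big] \le 4\bar{C}_1 h\, e^{16T(T+\bar{C}_2)},
\]
so the claim holds with $C_{EM} := 4\bar{C}_1 e^{16T(T+\bar{C}_2)}$. I would then stress that the local-error constant $\bar{C}_1$ originates solely from the boundedness of $b$ and $\sigma$ (Assumption~\ref{Abounded}): the residual integral over the final partial subinterval $[hN^h_t,t]$ and the within-step frozen-coefficient error each have length at most $h$, so their drift contributions are $O(h^2)$ and their stochastic contributions are $O(h)$ after the BDG inequality, giving the order-$h$ rate. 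The BDG constant $\bar{C}_2$ and the horizon $T$ are likewise fixed model quantities, so $C_{EM}$ depends only on the model parameters and $T$, exactly as in the single-agent Corollary~\ref{cor:EMAppro1AMV}.

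The point requiring the most care, and the reason the estimate is stated with the supremum over agents outside the expectation, is the \emph{uniformity in $N$}. The interaction enters the coupling only through the empirical measures, and in the proof of Theorem~\ref{ThconvergenceNparti} this is controlled by the elementary bound $W_1(\mu^N_s,\mu^{N,h}_s)^2 \le W_2(\mu^N_s,\mu^{N,h}_s)^2 \le \frac1N\sum_{j=1}^N |X^{N,j}_s - X^{N,j,h}(s)|^2$, which after taking expectations is dominated by $\max_{1\le i\le N}\mathbb{E}[\sup_{r\le s}|X^{N,i}_r - X^{N,i,h}(r)|^2]$. The averaging by $1/N$ is exactly what prevents the interaction term from accumulating with the number of agents: it collapses the $N$ per-agent error inequalities into a single scalar Gronwall inequality for $t\mapsto \max_{1\le i\le N}\mathbb{E}[\sup_{s\le t}|X^{N,i}_s - X^{N,i,h}(s)|^2]$ whose coefficients carry no $N$-dependence. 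Verifying that no hidden $N$-dependence slips in through this step --- that the symmetric structure indeed lets one close Gronwall on the maximum rather than on a growing sum --- is the main (and essentially only substantive) obstacle; the remainder is a verbatim repetition of the single-agent argument.
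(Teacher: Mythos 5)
Your proof is correct and follows essentially the same route as the paper: the paper's own justification is precisely that fixing a piecewise-constant policy makes $\bar{\eta}_h = 0$ in the Gronwall estimate \eqref{EThconvN1B} from Theorem~\ref{ThconvergenceNparti}, which immediately yields the bound with $C_{EM} = 4\bar{C}_1 e^{16T(T+\bar{C}_2)}$, independent of $N$ and of the policy. Your additional remarks on the source of $\bar{C}_1$ and on the $N$-uniformity of the Gronwall closure via $W_1(\mu^N_s,\mu^{N,h}_s)^2 \le \frac{1}{N}\sum_j |X^{N,j}_s - X^{N,j,h}_s|^2$ are accurate elaborations of what the theorem's proof already establishes.
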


Next lemma provides uniform  discrete-time approximation bounds under piece-wise constant control policies\,.
\begin{lemma}\label{lem:cost_consistency}
Suppose Assumptions \ref{Alipschitz}, \ref{Abounded} and \ref{Acost} hold. Then, for any piecewise-constant non-anticipative (with uniform grid size $h$) policy \({\bf m}^{N,h}\), we have
\[
\big|\cJ_T^N(x, {\bf m}^{N,h}) - \cJ^{N}_{T,h}(x, {\bf m}^{N,h})\big| \le C_{\mathrm{cost}}\, \sqrt{h},
\]
with \(C_{\mathrm{cost}}\) depending only on \(T,\) and the Lipschitz constants of $b, \upsigma, c, c_T$; in particular \(C_{\mathrm{cost}}\) can be chosen independent of \(N\) and of \({\bf m}^{N,h}\).
\end{lemma}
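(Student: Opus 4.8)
The plan is to mirror the single-agent argument of Theorem~\ref{Tconvevaluefunc}, inserting the interpolated cost $\bar{\cJ}^N_{T,h}$ of \eqref{ENcostinterpo1A} as a bridge and tracking the $1/N$ averaging carefully so that every constant remains independent of $N$. First I would split
\[
\big|\cJ_T^N(x,{\bf m}^{N,h}) - \cJ^N_{T,h}(x,{\bf m}^{N,h})\big| \le \big|\cJ_T^N - \bar{\cJ}^N_{T,h}\big| + \big|\bar{\cJ}^N_{T,h} - \cJ^N_{T,h}\big|
\]
and treat the two pieces separately, the first being a strong-error (state-approximation) estimate and the second a purely mechanical discretization-of-the-integral estimate.

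For the second piece I would observe that on each grid interval $[t_k,t_{k+1})$ the interpolated state, control, and empirical measure are all constant, namely $X^{N,i,h}(s)=X^{N,i}_k$, $U^{N,i,h}(s)=U^{N,i}_k$, and $\mu^{N,h}_s=\mu^N_k$. Hence the Riemann sum $\sum_k c(X^{N,i}_k,\mu^N_k,U^{N,i}_k)\,h$ equals $\int_0^{hN_h^T} c(X^{N,i,h}(s),\mu^{N,h}_s,U^{N,i,h}(s))\,ds$ exactly, and the terminal costs coincide because $X^{N,i,h}(T)=X^{N,i}_{N_h^T}$. The only remaining discrepancy is the leftover integral on $[hN_h^T,T]$, bounded by $\|c\|_{\infty}\,h$ per particle, so averaging over $i$ gives $\big|\bar{\cJ}^N_{T,h} - \cJ^N_{T,h}\big| \le \|c\|_{\infty}\,h = \order(h)$, with a constant independent of $N$ and of the policy.

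For the first piece, both costs use the same piecewise-constant control, so I would invoke the Lipschitz regularity of $c$ and $c_T$ (Assumption~\ref{Acost}) to bound, for each $i$ and $t$,
\[
\big|c(X^{N,i}_t,\mu^N_t,\cdot) - c(X^{N,i,h}(t),\mu^{N,h}_t,\cdot)\big| \le C_3\big(|X^{N,i}_t - X^{N,i,h}(t)| + W_1(\mu^N_t,\mu^{N,h}_t)\big),
\]
and analogously for $c_T$. The crucial step is to absorb the empirical-measure term uniformly in $N$: using $W_1(\mu^N_t,\mu^{N,h}_t)^2 \le W_2(\mu^N_t,\mu^{N,h}_t)^2 \le \tfrac1N\sum_{j=1}^N |X^{N,j}_t - X^{N,j,h}_t|^2$, then taking expectations and applying Cauchy--Schwarz exactly as in the derivation of \eqref{EcostConver1A}, every term is dominated by $\sqrt{\sup_{1\le i\le N}\mathbb{E}[\sup_{t\le T}|X^{N,i}_t - X^{N,i,h}(t)|^2]}$. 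Corollary~\ref{cor:EMAppro1A} bounds this quantity by $\sqrt{C_{EM}\,h}$ with $C_{EM}$ independent of $N$, yielding $\big|\cJ_T^N - \bar{\cJ}^N_{T,h}\big| \le C\sqrt{h}$ uniformly in $N$ and in ${\bf m}^{N,h}$. Combining the two pieces gives $\order(h)+\order(\sqrt h)=\order(\sqrt h)$, the asserted bound, with $C_{\mathrm{cost}}$ depending only on $T$ and the Lipschitz/bound constants of $b,\upsigma,c,c_T$.

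The only genuinely delicate point I anticipate is the $N$-uniformity. One must ensure the empirical-measure discrepancy $W_1(\mu^N_t,\mu^{N,h}_t)$ is controlled by the per-particle strong error rather than by a term that degrades as $N$ grows. This is precisely what the averaged $L^2$-bound $\tfrac1N\sum_j|X^{N,j}_t-X^{N,j,h}_t|^2$ achieves, since its expectation is dominated by $\sup_{1\le i\le N}\mathbb{E}[\sup_{t\le T}|X^{N,i}_t - X^{N,i,h}(t)|^2]$, which Corollary~\ref{cor:EMAppro1A} makes uniform in $N$; everything else follows the single-agent template.
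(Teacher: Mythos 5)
Your proposal is correct and follows essentially the same route as the paper's proof: the same triangle-inequality decomposition through the interpolated cost $\bar{\cJ}^N_{T,h}$, the same exact-Riemann-sum observation giving the $\|c\|_{\infty}h$ term, and the same Lipschitz-plus-Corollary~\ref{cor:EMAppro1A} estimate for the strong-error term. If anything, your handling of the empirical-measure term via $\mathbb{E}\bigl[\tfrac{1}{N}\sum_j |X^{N,j}_t-X^{N,j,h}_t|^2\bigr]\le \sup_j \mathbb{E}\bigl[\sup_{t\le T}|X^{N,j}_t-X^{N,j,h}(t)|^2\bigr]$ is slightly cleaner than the paper's bound involving $\mathbb{E}[\max_{1\le i\le N}\sup_{t\le T}|\cdot|^2]$, since your averaged form is directly controlled by Corollary~\ref{cor:EMAppro1A} uniformly in $N$.
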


\begin{proof}
By the triangle inequality, we have 
\begin{align*}
&\big|\cJ_T^N(x, {\bf m}^{N,h}) - \cJ^{N}_{T,h}(x, {\bf m}^{N,h})\big| \nonumber \\
&\le \big|\cJ^{N}_{T,h}(x, {\bf m}^{N,h}) - \bar{\cJ}^{N}_{T,h}(x, {\bf m}^{N,h})\big| + \big|\bar{\cJ}^{N}_{T,h}(x, {\bf m}^{N,h}) - \cJ_T^N(x, {\bf m}^{N,h})\big|   
\end{align*}
Arguing as in the proof of the Theorem~\ref{Tconvevaluefunc}, we have 
\begin{align}\label{Ecost_consistency1A}
\big|\cJ^{N}_{T,h}(x, {\bf m}^{N,h}) - \bar{\cJ}^{N}_{T,h}(x, {\bf m}^{N,h})\big| \leq \|c\|_{\infty}h    
\end{align}
Since $c, c_T$ are Lipschitz continuous, for some constant $\bar{C}_5 > 0$ we get
\begin{align}\label{Ecost_consistency1B}
&\abs{\bar{\cJ}^{N}_{T,h}(x, {\bf m}^{N,h}) - \cJ_T^N(x, {\bf m}^{N,h})} \nonumber \\
\leq & \frac{1}{N}\sum_{i=1}^{N} \abs{\mathbb{E}_{x} \left[ \int_0^T c(X^{N,i}_t, \mu^{N}_t, {\bf m}^{N,i,h}(t)) dt + c_T(X_T^{N,h}, \mu^{N}_T) \right] \nonumber\\
& - \mathbb{E}_{x} \left[ \int_0^T c(X^{N,i,h}(t), \mu^{N,h}_t, {\bf m}^{N,i,h}(t)) dt + c_T(X^{N,i,h}(T), \mu^{N,h}_T) \right]}\nonumber\\
\leq & \bar{C}_5\sqrt{\mathbb{E}_x[\max_{1\leq i\leq N}\sup_{t\leq T}\abs{X^{N,i}_t - X^{N,i,h}(t)}^2]} \leq \bar{C}_6\sqrt{h},
\end{align} the last inequality follows eq. (\ref{ENConvCor1A})\,. Combining (\ref{Ecost_consistency1A}) and (\ref{Ecost_consistency1B}) we obtain our result.
\end{proof}
\begin{corollary}\label{Cor:forNparticle1a}
In view of the above result, it follows that for piecewise-constant policies (aligned with the Euler-Maruyama grid) there is a uniform error estimate
\[
\sup_{{\bf m}^{N,h}\ \text{piecewise-const on grid }h}\big|\cJ_T^N(x,{\bf m}^{N,h}) - \cJ_{T,h}^{N}(x,{\bf m}^{N,h})\big| \le \bar{C}\,h^{\frac{1}{2}},
\]
for a constant $\bar{C}$ depending only on model parameters and $T$.
\end{corollary}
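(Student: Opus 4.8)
The plan is to obtain this corollary directly from Lemma~\ref{lem:cost_consistency} by exploiting the crucial feature that the bound established there is \emph{uniform} in the policy. First I would recall that Lemma~\ref{lem:cost_consistency} furnishes, for each fixed piecewise-constant non-anticipative policy ${\bf m}^{N,h}$ aligned with the grid of size $h$, the estimate $\big|\cJ_T^N(x,{\bf m}^{N,h}) - \cJ_{T,h}^N(x,{\bf m}^{N,h})\big| \le C_{\mathrm{cost}}\sqrt{h}$. The content of the corollary is then simply the passage to a supremum over the family of all such policies.

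The key point I would emphasize is that the constant $C_{\mathrm{cost}}$ produced by Lemma~\ref{lem:cost_consistency} depends only on $T$ and the Lipschitz constants of $b,\upsigma,c,c_T$, and in particular is \emph{independent both of $N$ and of the choice of ${\bf m}^{N,h}$}. Consequently the identical numerical bound holds simultaneously across the entire family of grid-aligned piecewise-constant policies. Taking the supremum over this family on the left-hand side therefore preserves the inequality, and one sets $\bar{C} := C_{\mathrm{cost}}$; this is the whole of the argument.

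To make the uniformity fully transparent I would briefly trace its two sources in the chain $(\ref{Ecost_consistency1A})$--$(\ref{Ecost_consistency1B})$: the strong Euler--Maruyama error bound of Corollary~\ref{cor:EMAppro1A}, whose constant $C_{EM}$ is already asserted to be independent of $N$ and of ${\bf m}^{N,h}$ (itself a consequence of the uniform boundedness in Assumption~\ref{Abounded}, which renders the Gronwall exponent policy-free), together with the cost Lipschitz constants from Assumption~\ref{Acost} and the $O(h)$ Riemann-sum discretization error $\|c\|_{\infty}h$. None of these quantities involves the particular control values, so their combination is policy-independent as well.

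The main, and essentially only, subtlety is precisely this verification of policy-independence rather than any new estimation: the analytical labor was already carried out in Lemma~\ref{lem:cost_consistency}, and the corollary amounts to reading off the structure of that lemma's constant. There is no further obstacle, since bounding each member of a family by a common constant trivially bounds the supremum by the same constant.
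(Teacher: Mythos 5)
Your proposal is correct and coincides with the paper's own (implicit) argument: the corollary is exactly the statement of Lemma~\ref{lem:cost_consistency} with the supremum taken over grid-aligned piecewise-constant policies, which is legitimate precisely because the constant $C_{\mathrm{cost}}$ there is independent of $N$ and of ${\bf m}^{N,h}$. Your tracing of the uniformity back to Corollary~\ref{cor:EMAppro1A} and Assumptions~\ref{Abounded} and \ref{Acost} is a faithful reading of why that constant is policy-free.
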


Arguing as in the Theorem~\ref{Tconvevaluefunc}, in the following theorem we show that the optimal value of the discrete-time N-particle system converges to the optimal value  of the N-particle continuous-time system\,.
\begin{theorem}\label{TNconvevaluefunc}
Suppose Assumptions \ref{Alipschitz}, \ref{Abounded}, and \ref{Acost} hold. Then, for all $x\in \Rd$ we have
\begin{equation*}
\lim_{h\downarrow 0}\abs{V_T^{N,h}(x) - V_T^N(x)} = 0\,.
\end{equation*}
\end{theorem}
\begin{proof}
Following the proof techniques as in the Theorem~\ref{ThExistenceA}, we have ${\bf m}^N \mapsto \cJ_T^{N}(x, {\bf m}^N)$ is continuous on $\cM^N(T)$\,. Also, applying the chattering lemma, it folows that for any $\epsilon > 0$ there exists a piece-wise constant non-anticipative control ${\bf m}^{N,\epsilon}$ satisfying 
\begin{equation}
\cJ_T^N(x, {\bf m}^{N,\epsilon}) \leq  V_T^N(x) + {\epsilon}\,.     
\end{equation}

From Lemma~\ref{lem:cost_consistency}, for small enough $h$, we have
\begin{equation}
\big|\cJ_T^N(x, {\bf m}^{N,\epsilon}) - \cJ^{N}_{T,h}(x, {\bf m}^{N,\epsilon})\big| \le C_{\mathrm{cost}}\, \sqrt{h},
\end{equation} thus 
\begin{align}\label{ETNconvevaluefunc1A}
V_T^{N,h}(x) \le \cJ_{T,h}^{N}(x, {\bf m}^{N,\epsilon}) \le \cJ_T^N(x, {\bf m}^{N,\epsilon}) + C_{cost} h^{1/2} \le V_T^N(x) + \varepsilon + C_{cost} h^{1/2}.
\end{align}

For each $h>0$ pick an (near) optimal discrete policy ${\bf m}^{N,\epsilon, h}$ satisfying $V_T^{N,h}(x) + \epsilon \geq \cJ_{T,h}^{N}(x, {\bf m}^{\epsilon, h})$. Extend ${\bf m}^{N,\epsilon, h}$ to a relaxed control in $\cM^N(T)$ by holding actions constant on each grid of size $h$; this yields a sequence $\{\widetilde{{\bf m}}^{N,\epsilon, h}\}\subset \cM^N(T)$. Thus, we have
\begin{equation}\label{ETNconvevaluefunc1B}
V_T^{N,h}(x) + \epsilon \ge \cJ_{T,h}^{N}(x, {\bf m}^{N,\epsilon, h}) \ge \cJ_T^N(x, \widetilde{{\bf m}}^{N,\epsilon, h}) - C_{cost} h^{1/2} \ge V_T^N(x) - C_{cost} h^{1/2}\,.    
\end{equation}
Since $\epsilon$ is  arbitrary combining (\ref{ETNconvevaluefunc1A}) and (\ref{ETNconvevaluefunc1B}), we obtain the desired result\,.
\end{proof}
Now we are ready to derive the main result of this sub-section. Let ${\bf U}^{N,h} = (U^h,\dots, U^h)$ where $U^h$ is an optimal solution for the discrete-time McKean-Vlasov problem adopted by each agent (recall (\ref{SymmetricMcKeanSoln})). 

\begin{theorem}\label{nearOptDiscFiniteHor}
Suppose Assumptions \ref{Alipschitz}, \ref{Abounded}, and \ref{Acost} hold. Then for any $\epsilon >0$, there exist $\hat{h}_{\epsilon}>0$ and $\hat{N}_{\epsilon}\in\mathbb{N}$ such that for all $0 < h < \hat{h}_{\epsilon}$ and all $N\geq \hat{N}_{\epsilon}$, we have
\[
\bigl|\cJ^N_{T}(x,{\bf U}^{N,h}) - V_T^N(x)\bigr| \;<\; \epsilon.
\]
\end{theorem}
\begin{proof}
By the triangle inequality we have
\begin{align*}
&\abs{\cJ^N_{T}(x,{\bf U}^{N,h})- V_T^N(x)} \nonumber \\
\leq & \abs{\cJ^N_{T}(x,{\bf U}^{N,h})- \cJ^N_{T,h}(x,{\bf U}^{N,h})} + \abs{\cJ^N_{T,h}(x,{\bf U}^{N,h})- V_T^{N,h}(x)} + \abs{V_T^{N,h}(x)- V_T^N(x)}\,.  
\end{align*}
From Lemma~\ref{lem:cost_consistency}, we have the first term of the above inequality goes to zero as $h\to 0$. Similarly, from Theorem~\ref{TNconvevaluefunc}, the third term of the above inequality goes to zero as $h\to 0$\,. Observe critically that these approximations hold uniformly in $N \in \mathbb{N}$. Therefore, for every $\epsilon > 0$ by taking $h$ small enough the errors are bounded away from $ 2\epsilon/3$. For the second term, the approach is then to take $N$ large enough to attain an error of at most $\epsilon / 3$. By arguing as in the proof of \cite[Theorem~4]{sanjarisaldiy2024optimality}, for any $\epsilon > 0$ there exists small enough $h_{\epsilon} >0$ and large enough $\hat{N}_{\epsilon}$ such that
$$\abs{\cJ^N_{T,h}(x,{\bf U}^{N,h})- V_T^{N,h}(x)} \leq \frac{\epsilon}{3}$$
for any $h\leq h_{\epsilon}$ and $N \geq \hat{N}_{\epsilon}$. This completes the proof.
\end{proof}
In view of the above near optimality result for the finite horizon cost and the estimate as in (\ref{EConvergenceConti1A}), we have the following near-optimality result for the discounted cost $N$-particle system. Let $U^h$ be the discounted optimal policy for the discrete-time McKean-Vlasov control problem and let ${\bf U}^{N,h} = (U^h,\dots, U^h)$ then we have the following.
\begin{theorem}\label{nearOptDiscDiscount}
Suppose Assumptions \ref{Alipschitz}, \ref{Abounded}, and \ref{Acost} hold. Then for any $\epsilon >0$, there exist $\hat{h}_{\epsilon}>0$ and $\hat{N}_{\epsilon}\in\mathbb{N}$ such that for all $0 < h < \hat{h}_{\epsilon}$ and all $N\geq \hat{N}_{\epsilon}$, we have
\[
\bigl|\cJ^N_{\alpha}(x,{\bf U}^{N,h}) - V_{\alpha}^N(x)\bigr| \;<\; \epsilon.
\]
\end{theorem}

\begin{remark}[Centralized vs. Mean-Field Shared Decentralized Information Structure] A corollary of our analysis is that under the centralized information structure, an optimal policy for the McKean-Vlasov problem can be implemented also by interacting particles who have access to only local state information and the mean-field (empirical) measure; see (\ref{SymmetricMcKeanSoln}). Therefore, for large interacting particle models of the type considered, decentralized information structure, as long as there is access to mean-field distribution, leads to a performance which is as well as one attained under  centralized information. Of course, this observation is not new (as reviewed extensively in \cite{sanjarisaldiy2024optimality}), but it is important to state in this approximation context where both the sampling time and the number of agents appear as approximation parameters.
\end{remark}

We end this section by relating the approximation results developed in this paper with existing finite-population approximation results for mean-field control problems. The convergence of finite-agent value functions toward the limiting McKean--Vlasov value function under compactness and weak convergence methods was established by Lacker~\cite{lacker2017limit}. Quantitative convergence rates, including $1/N$-type estimates under stronger regularity assumptions, have also been obtained recently through PDE and master-equation techniques under more restrictive model assumptions; see Jackson et al.~\cite{jackson2023rate}. Compared with \cite{lacker2017limit} the conditions on near optimality of Markov (in state and mean-field) policies presented in Theorems \ref{nearOptDiscFiniteHor} and \ref{nearOptDiscDiscount} are weaker as a convexity condition is not imposed. The theorem below follows from combining the Euler--Maruyama approximation results proved here with propagation-of-chaos estimates available under Lipschitz continuity assumptions. 

\begin{theorem}
\label{thm:Nparticlelimits}
Suppose Assumptions \ref{Alipschitz}, \ref{Abounded}, and \ref{Acost} hold. Then, for all $x\in \Rd$ we have
\[
\lim_{N\to\infty}V_T^N(x)=V_T(x).
\]
\end{theorem}

\begin{proof}
By Theorem~\ref{Tconvevaluefunc}, we have
\[
\bigl|V_T^{h}(x)-V_T(x)\bigr|<\varepsilon/3
\]
for all sufficiently small $h$. Also, \cite[Theorem~4]{sanjarisaldiy2024optimality} yields that, for any fixed $h$, there exists
$N_h\in\mathbb N$ such that
\[
\bigl|V_T^{N,h}(x)-V_T^{h}(x)\bigr|<\varepsilon/3
\qquad\text{for all }N\ge N_h.
\]
Finally, by the uniform Euler--Maruyama estimate Theorem~\ref{TNconvevaluefunc}, it follows from  that
\[
\sup_{N\ge1}\bigl|V_T^{N}(x)-V_T^{N,h}(x)\bigr|\to 0
\qquad\text{as }h\downarrow0.
\]
Hence, by choosing $h$ sufficiently small (if necessary), we may also ensure that
\[
\bigl|V_T^{N}(x)-V_T^{N,h}(x)\bigr|<\varepsilon/3
\,.
\]
Therefore, for all $h$ sufficiently small and $N\geq N_h > 0$, we deduce that
\[
\bigl|V_T^N(x)-V_T(x)\bigr|
\le
\bigl|V_T^N(x)-V_T^{N,h}(x)\bigr|
+
\bigl|V_T^{N,h}(x)-V_T^{h}(x)\bigr|
+
\bigl|V_T^{h}(x)-V_T(x)\bigr|
<\varepsilon.
\]
Since $\varepsilon>0$ is arbitrary, the claim follows.
\end{proof}

By the similar argument as in Theorem~\ref{thm:Nparticlelimits}, we have the following convergence result for the discounted cost case.  

\begin{theorem}
\label{thm:Nparticlelimitsdiscounted}
Suppose Assumptions \ref{Alipschitz}, \ref{Abounded}, and \ref{Acost} hold. Then, for all $x\in \Rd$ we have
\[
\lim_{N\to\infty}V_{\alpha}^N(x)=V_{\alpha}(x).
\]
\end{theorem}

\section{Finite Model Approximations: Near Optimality of Solutions under Weak Feller Regularity of the Discrete-Time Approximate Model}\label{FiniteModApp}

In the previous section, we established the similarity in the behaviour of finite-agent and discrete-time approximations and the McKean-Vlasov limits. Having access to the exact mean-field measure may be very demanding and it would be desirable to be able to approximate this measure.

In this section, we show that one can obtain a near optimal policy for the McKean-Vlasov problem via a finite model approximation obtained through the quantization of the local state and mean-field distribution. Now that we have shown the near optimality of discrete-time approximations, if we can show weak Feller regularity of the discrete-time model, building on \cite[Section 7.2]{sanjarisaldiy2024optimality} and the approximation results in \cite{SaYuLi15c}, we can obtain a finite space Markov Decision Process whose solution will be near optimal for the controlled diffusion. We note that a similar program has been pursued in \cite{motte2022mean} under slightly stronger regularity assumptions, beyond the weak Feller condition. 

For the analysis in this section, we further impose the following non-degeneracy condition. 
\begin{assumption}\label{Anondegeracy}
For all $z=(z_{1},\dotsc,z_{d})\transp\in\RR^{d}$ there exists $\hat{C}_1 >0$, such that  
\begin{equation*}
\sum_{i,j=1}^{d} a^{ij}(x)z_{i}z_{j}
\,\ge\,\hat{C}_{1} \abs{z}^{2} \qquad\forall\, x\in \RR^{d}\,,
\end{equation*}
where $a\df \frac{1}{2}\upsigma \upsigma\transp$\,.
\end{assumption}

Under this additional assumption, all conditions of \cite[Assumption 1]{sanjarisaldiy2024optimality} are satisfied, except for compactness of the state space, which may be replaced by a suitable tightness condition on the sequence ${\cal L}(X_t)$.

We next verify that the discrete-time approximation satisfies the regularity properties required for the analysis that follows.
\begin{lemma}\label{regMcV}
Consider the discrete-time model with dynamics (\ref{Eeuler_maruyama}) and cost given in (\ref{Ediscrete_cost}). Then, under Assumptions \ref{Alipschitz}, \ref{Abounded}, \ref{Acost}, and \ref{Anondegeracy}, we have
\begin{itemize}
\item[(i)] The map: 
\[ \Rd \times {\cal P}(\Rd) \times \mathbb{U} \ni (x,\mu,\zeta) \mapsto \mathbb{E}[g(X^h_{k+1}) | X^h_{k}=x, {\cal L}(X^h_k)=\mu,U^h_{k}=\zeta] \in \mathbb{R},\]
is continuous for every bounded continuous $g: \Rd \to \RR$.
\item[(ii)] The cost function $c^h(x,\mu,u):= c(x, \mu, u) h: \Rd \times {\cal P}(\Rd) \times \mathbb{U}  \to \mathbb{R}$ is continuous and bounded.
\item[(iii)] For all $k \geq 0$, ${\cal L}(X^h_k)$ admits a density which is positive everywhere.
\end{itemize}
\end{lemma}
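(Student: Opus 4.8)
The plan is to dispatch parts (i) and (ii) quickly and concentrate the work on (iii). Part (i) is nothing but the weak Feller property already established in Theorem~\ref{TOptiDiscMcKV1A}(i): the one-step Euler--Maruyama map $(x,\mu,\zeta)\mapsto x+b(x,\mu,\zeta)h+\upsigma(x,\mu)\Delta W_k$ depends continuously (in the $L^2$, hence weak, sense) on its arguments by the Lipschitz continuity of $b,\upsigma$ together with $\bE[\abs{\Delta W_k}^2]=h$, so I would simply invoke that theorem. Part (ii) is immediate from Assumption~\ref{Acost}: $c$ is continuous and bounded, and multiplication by the constant $h$ preserves both properties, so $c^h=hc$ is continuous and bounded.

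For part (iii) I would argue directly, with no induction, that a single Euler--Maruyama step injects nondegenerate Gaussian noise. Fix $k\ge 1$ and condition on $X^h_{k-1}=x$, treating $\mu_{k-1}=\sL(X^h_{k-1})$ and a control value $\zeta$ as parameters. Then
\[
X^h_{k}\mid\{X^h_{k-1}=x\}\ \sim\ \cN\!\bigl(x+b(x,\mu_{k-1},\zeta)h,\ h\,\upsigma(x,\mu_{k-1})\upsigma(x,\mu_{k-1})\transp\bigr),
\]
since $\Delta W_{k-1}\sim\cN(0,hI_d)$. The covariance is $2h\,a(x,\mu_{k-1})$, and the nondegeneracy hypothesis (together with the uniform upper bound on $\upsigma$ from Assumption~\ref{Abounded}) pins its eigenvalues into $[2h\hat{C}_1,\,C]$; in particular it is invertible. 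Hence the conditional law has the explicit Gaussian density
\[
p(x,y)=\frac{\exp\!\bigl(-\tfrac12 (y-m(x))\transp\Sigma(x)^{-1}(y-m(x))\bigr)}{\bigl((2\pi)^{d}\det\Sigma(x)\bigr)^{1/2}},\qquad m(x)=x+b h,\ \ \Sigma(x)=2h\,a(x,\mu_{k-1}),
\]
which is \emph{strictly positive at every} $y\in\Rd$.

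I would then write the marginal law of $X^h_k$ as the mixture of these Gaussians over the state and the (relaxed) control kernel $Q_{k-1}(d\zeta\mid x)$,
\[
\sL(X^h_k)(dy)=\Bigl(\int_{\Rd}\int_{\Act}p(x,y;\zeta)\,Q_{k-1}(d\zeta\mid x)\,\sL(X^h_{k-1})(dx)\Bigr)dy,
\]
where Tonelli's theorem justifies pulling the Lebesgue integration outside (the integrand is jointly measurable and, by the eigenvalue bounds above, bounded). This exhibits the Lebesgue density $f_k(y)=\int\!\int p(x,y;\zeta)\,Q_{k-1}(d\zeta\mid x)\,\sL(X^h_{k-1})(dx)$, and since the integrand is strictly positive for every $(x,\zeta)$ while $\sL(X^h_{k-1})$ is a probability measure, $f_k(y)>0$ for all $y\in\Rd$.

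The genuine obstacle is not analytic depth but a boundary case: the stated initial condition $X^h_0=x$ is deterministic, so $\sL(X^h_0)=\delta_x$ has no density, and the assertion can hold only for $k\ge 1$ (it holds from the first step onward, or for all $k\ge 0$ if one instead starts from an initial law that already admits a density). Beyond this, the only care needed is bookkeeping: confirming joint measurability of $(x,\zeta,y)\mapsto p(x,y;\zeta)$ and the validity of the Tonelli interchange, both of which follow from the uniform ellipticity lower bound and the boundedness of $b,\upsigma$, and reading the hypothesis $\sum_{i,j}a^{ij}z_iz_j\ge\hat{C}_1\abs{z}^2$ as holding uniformly in the measure argument $\mu$ as well (since $\upsigma$, and hence $a$, depends on $\mu$).
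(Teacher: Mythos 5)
Your proposal is correct and takes essentially the same route as the paper: the paper's own (one-line) proof disposes of (i) by invoking Theorem~\ref{TOptiDiscMcKV1A}(i), of (ii) by hypothesis on the cost, and of (iii) by the non-degeneracy of the diffusion matrix, which is exactly your structure, with your Gaussian-mixture computation merely supplying the details the paper leaves implicit. Your caveat that (iii) can only hold for $k\ge 1$ (since $X^h_0=x$ is deterministic, so $\sL(X^h_0)=\delta_x$ admits no density) is a correct refinement of the statement that the paper glosses over.
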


By Theorem \ref{TOptiDiscMcKV1A}(i), Condition (i) above holds, (iii) holds due to the conditions on the diffusion matrix under non-degeneracy, and (ii) holds by hypothesis.

As earlier in the proof of Theorem \ref{TOptiDiscMcKV1A}, one can define a controlled Markov model where $\mu_t = {\cal L}(X_t)$ is the state and the joint measure $$\Theta_t = \{v \in {\cal P}(\Rd \times \mathbb{U}): v(dx\times \mathbb{U})=\mu_t(dx) \}$$ serving as the {\it action} (endowed with the Young topology  \cite{balder1997consequences} \cite{bauerle2010optimal,bauerle2018optimal}\cite{florescu2012young}\cite{yuksel2023borkar}). In particular, $\Theta_t$ defines a control policy for $\mu_t$ almost every $x \in \Rd$ as a relaxed control policy. Then, $(\mu_t, \Theta_t)$ forms a controlled Markov chain and an optimal $\Theta_t$ is uniquely defined by $\mu_t$, leading to an optimal control. Accordingly, under Theorem \ref{regMcV}, one can quantize $\mu_t$ and $\Theta_t$ to obtain a finite model approximation, parametrized by a granularity level $n$, whose solution $\hat U^{h,n}$ is near optimal (i.e., asymptotically optimal as $n \to \infty$) for the continuous space model by \cite[Section 7.2]{sanjarisaldiy2024optimality}. Therefore, near optimality of a finite model approximation for the controlled McKean-Vlasov diffusion problem follows.
The above discussion yields the following asymptotic optimality result.
\begin{theorem}
\label{thm:finite_state_near_discrete}
Suppose Assumptions \ref{Alipschitz}, \ref{Abounded}, and \ref{Acost} hold. Let $\hat U^{h,n}$
be an optimal policy for the finite-state approximation of the
discrete-time McKean--Vlasov finite-horizon/discounted control problem (by a slight abuse of notation, we use the same notation for both quantities) with discretization
parameters \(h>0\).
Then, for \(x\in\mathbb R^d\), as $n\to \infty$
\begin{itemize}
\item Finite horizon: 
$\bigl|
\cJ_{T,h}^{\hat U^{h,n}}(x)-V_T^h(x)
\bigr|
\to 0\,.$
\item Discounted infinite horizon: $\bigl|
\cJ_{\alpha,h}^{\hat U^{h,n}}(x)-V_{\alpha}^h(x)
\bigr|
\to 0\,.$
\end{itemize}
\end{theorem}

Combining the finite-state approximation above with the discrete-time approximation result
proved earlier Theorem~\ref{TNearFinite}, we obtain near-optimality of the finite-state optimal policy for the
continuous-time McKean--Vlasov problem.
\begin{theorem}
\label{thm:finite_state_near_optimality}
Suppose Assumptions \ref{Alipschitz}, \ref{Abounded}, and \ref{Acost} hold. Let
\(
\hat U^{h,n}
\)
be an optimal policy for the finite-state approximation of the
discrete-time McKean--Vlasov finite-horizon/discounted control problem (by a slight abuse of notation, we use the same notation for both quantities) with discretization
parameters \(h>0\). Let $\tilde U^{h,n}$
be the corresponding piecewise-constant extension. Then,
for every initial condition \(x\in\mathbb R^d\), as $h\to 0$ and $n\to \infty$
\begin{itemize}
\item Finite horizon: 
$\bigl|
\cJ_T^{\tilde U^{h,n}}(x)-V_T(x)
\bigr|
\to 0\,.$
\item Discounted infinite horizon: $\bigl|
\cJ_{\alpha}^{\tilde U^{h,n}}(x)-V_{\alpha}(x)
\bigr|
\to 0\,.$
\end{itemize}
\end{theorem}

We now combine the above approximation results with the mean-field approximation results established earlier for the \(N\)-agent system Theorem~\ref{nearOptDiscFiniteHor}, Theorem~\ref{nearOptDiscDiscount}, we deduce the following.

\begin{theorem}\label{nearOptDiscDiscountFiniteAppr}
Suppose Assumptions \ref{Alipschitz}, \ref{Abounded}, \ref{Acost}, and \ref{Anondegeracy} hold. 
\begin{itemize}
\item (Finite Horizon)
Let $\hat{U}^{h,n}$ be a near optimal finite horizon optimal policy for the discrete-time McKean-Vlasov control problem obtained via finite model approximation in the above, and let ${\bf {\hat U}}^{N,h,n} = (\hat{U}^{h,n},\dots,\hat{U}^{h,n})$ (that is, the solution adopted by each agent). Then for any $\epsilon >0$, there exist $\bar{n} \in \mathbb{N}$, $\hat{h}_0>0$ and $\hat{N}_0\in\mathbb{N}$ such that for all $n \geq \bar{n}$, $0 < h < \hat{h}_0$ and all $N\geq \hat{N}_0$, 
\[
\bigl|\cJ^N_{T}(x,{\bf {\hat U}}^{N,h,n}) - V_T^N(x)\bigr| \;<\; \epsilon.
\]
\item (Discounted Infinite Horizon)
Let $\hat{U}^{h,n}$ be a near optimal discounted optimal policy for the discrete-time McKean-Vlasov control problem obtained via finite model approximation in the above, and let ${\bf {\hat U}}^{N,h,n} = (\hat{U}^{h,n},\dots,\hat{U}^{h,n})$ (that is, the solution adopted by each agent). Then for any $\epsilon >0$, there exist $\bar{n} \in \mathbb{N}$, $\hat{h}_0>0$ and $\hat{N}_0\in\mathbb{N}$ such that for all $n \geq \bar{n}$, $0 < h < \hat{h}_0$ and all $N\geq \hat{N}_0$, we have
\[
\bigl|\cJ^N_{\alpha}(x,{\bf {\hat U}}^{N,h,n}) - V_{\alpha}^N(x)\bigr| \;<\; \epsilon.
\]

\end{itemize}

\end{theorem}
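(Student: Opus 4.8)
The plan is to prove the finite-horizon claim by a three-layer triangle inequality that isolates the time-discretization error (governed by $h$), the finite-model quantization error (governed by $n$), and the finite-agent/propagation-of-chaos error (governed by $N$), and then to obtain the discounted claim by the same scheme with the discounted analogues of each ingredient. Writing $\hat U^{h,n}$ for the lifted (Markov) near-optimal policy produced by the finite-model approximation of the discrete-time McKean--Vlasov problem and ${\bf \hat U}^{N,h,n}=(\hat U^{h,n},\dots,\hat U^{h,n})$ for its symmetric adoption across the $N$ agents, I would start from
\begin{align*}
\bigl| \cJ^N_{T}(x,{\bf \hat U}^{N,h,n}) - V_T^N(x) \bigr|
&\le \bigl| \cJ^N_{T}(x,{\bf \hat U}^{N,h,n}) - \cJ^N_{T,h}(x,{\bf \hat U}^{N,h,n}) \bigr| \\
&\quad + \bigl| \cJ^N_{T,h}(x,{\bf \hat U}^{N,h,n}) - V_T^{N,h}(x) \bigr|
 + \bigl| V_T^{N,h}(x) - V_T^N(x) \bigr|.
\end{align*}
Since the continuous-time interpolation of $\hat U^{h,n}$ is piecewise constant on the grid of mesh $h$, Lemma~\ref{lem:cost_consistency} bounds the first term by $C_{\mathrm{cost}}\sqrt h$, and Theorem~\ref{TNconvevaluefunc} bounds the third term by $\hat C_9\sqrt h$; both constants depend only on the model parameters and $T$, and in particular are uniform in $N$ and in the specific policy (hence in $n$). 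I would therefore first fix $\hat h_0$ so small that these two contributions together stay below $\epsilon/2$ for every $0<h<\hat h_0$, every $N$, and every $n$.

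The remaining middle term is the main object, and I would decompose it at the representative-agent level as
\begin{align*}
\bigl| \cJ^N_{T,h}(x,{\bf \hat U}^{N,h,n}) - V_T^{N,h}(x) \bigr|
&\le \bigl| \cJ^N_{T,h}(x,{\bf \hat U}^{N,h,n}) - \cJ_{T,h}^{\hat U^{h,n}}(x) \bigr| \\
&\quad + \bigl| \cJ_{T,h}^{\hat U^{h,n}}(x) - V_T^h(x) \bigr|
 + \bigl| V_T^h(x) - V_T^{N,h}(x) \bigr|,
\end{align*}
where $\cJ_{T,h}^{\hat U^{h,n}}(x)$ is the discrete-time McKean--Vlasov cost incurred by the representative agent under $\hat U^{h,n}$. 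The middle term here is precisely the finite-model approximation error: by Lemma~\ref{regMcV} the discrete-time kernel is weak Feller and the cost is continuous and bounded, so the framework of \cite[Section~7.2]{sanjarisaldiy2024optimality}, together with the measure-valued state/action quantization bounds of \cite{SaYuLi15c}, applies to the lifted MDP with state $\mu_k$ and Young-topology action $\Theta_k$; this yields $\bigl| \cJ_{T,h}^{\hat U^{h,n}}(x) - V_T^h(x) \bigr|\le \gamma_n$ with $\gamma_n\to 0$ as $n\to\infty$. The first and third terms are the discrete-time propagation-of-chaos estimates: they are exactly the quantities shown to vanish as $N\to\infty$ in the proof of Theorem~\ref{nearOptDiscFiniteHor} via \cite[Theorem~4]{sanjarisaldiy2024optimality}, applied now to the symmetric adoption of the \emph{near}-optimal (rather than exactly optimal) policy $\hat U^{h,n}$. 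Choosing $\bar n$ so that $\gamma_n<\epsilon/4$ for $n\ge\bar n$, and then $\hat N_0$ large enough that the two $N$-dependent terms together fall below $\epsilon/4$, completes the estimate and delivers the overall bound $\epsilon$.

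The main obstacle will be the order and uniformity of the three limits: both the quantization bound $\gamma_n$ and the chaos rate in $N$ must be reconciled with the requirement that the final estimate hold simultaneously over the whole range $\{0<h<\hat h_0,\ n\ge\bar n,\ N\ge\hat N_0\}$, whereas a naive count over the $N_T^h\approx T/h$ stages would make these rates deteriorate as $h\to 0$. The key point I would exploit is that the cost is mesh-weighted, $\cJ^N_{T,h}=\frac1N\sum_i\mathbb{E}[\sum_k c(\cdot)\,h+c_T]$, so that a per-stage error of order $h$ accumulates over $T/h$ stages to an $O(1)$ bound that is uniform in $h$; combined with the uniform-in-$N$ strong error estimate of Corollary~\ref{cor:EMAppro1A}, this gives $\gamma_n$ and the chaos estimate uniformly over $0<h<\hat h_0$, so the parameters $\hat h_0,\bar n,\hat N_0$ may indeed be chosen in the stated (essentially decoupled) order. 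Finally, the discounted-horizon claim follows along identical lines after replacing Lemma~\ref{lem:cost_consistency}, Theorem~\ref{TNconvevaluefunc}, and Theorem~\ref{nearOptDiscFiniteHor} by their discounted counterparts (the estimate \eqref{EConvergenceConti1A}, Theorems~\ref{TConvergenceConti} and \ref{TNeardiscounted}, and Theorem~\ref{nearOptDiscDiscount}), the geometric factor $\beta=e^{-\alpha h}$ being used to truncate the infinite horizon at a level $T_\epsilon$ exactly as in the proof of Theorem~\ref{TConvergenceConti} before the same three-layer argument is applied on $[0,T_\epsilon]$.
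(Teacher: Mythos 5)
Your proposal is correct and follows essentially the same route as the paper, whose own proof is only a brief sketch stating that the result is ``immediate by a triangle inequality argument'' combining the finite-model approximation (in $n$), the discrete-time approximation results (Lemma~\ref{lem:cost_consistency}, Theorem~\ref{TNconvevaluefunc}, uniform in $N$), and the particle-limit argument of \cite[Theorem~4]{sanjarisaldiy2024optimality} (in $N$). Your three-layer decomposition is a faithful and more detailed realization of exactly that sketch, and your attention to the uniformity of the $n$- and $N$-dependent errors over $0<h<\hat h_0$ is in fact more careful than the paper's own treatment.
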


Given the prior results, the proof is immediate by a triangle inequality argument: As $\hat{U}^{h,n}$ approximates the optimal policy $\hat{U}^{h}$ for large enough $n$, it suffices to take $N$, the number of particles large enough and the time-discretization parameter $h$ small enough so that the performance attained by these large interacting agents are approximated by their respective McKean-Vlasov dynamics under $\hat{U}^{h,n}$ and $\hat{U}^{h}$, respectively. 

\section{Conclusion}\label{Sconclusion}

In this paper, for the McKean-Vlasov diffusion model, we study the continuity of the expected cost with respect to the control policy, establish the existence of optimal control policies, and show that discrete-time approximations can be used to construct near-optimal policies with arbitrarily small performance loss. The resulting discrete-time model admits efficient numerical schemes with rigorous performance guarantees under mild regularity conditions.

We also show that such an approximate solution is near optimal for $N$-particle interacting systems for large $N$. We thus developed a general approximation framework in both time (via time discretization), space (via measure space quantization under weak Feller regularity for the discrete-time model), and the number of agents (via near optimality as $N$ grows under complementary conditions to those in the literature).

\section{Acknowledgement}
This research of the first author was partially supported by a Start-up Grant IISERB/ R\&D/2024-25/154 and Prime Minister Early Career Research Grant ANRF/ECRG/2024/001658/ PMS. The research of the second author was partially supported by the Natural Sciences and Engineering Research Council of Canada (NSERC)

\bibliographystyle{plain}
\bibliography{Quantization,SerdarBibliography}

\end{document}